\documentclass[10pt, reqno]{amsart}
\usepackage[utf8]{inputenc}

\usepackage{style}
\title{Sharp bounds for the fractional chromatic number of high-girth $d$-degenerate graphs}

\author{Peter Allen$^\star$}
\address{$^\star$Department of Mathematics, The London School of Economics and Political Science}
\email{p.d.allen@lse.ac.uk}

\author{Abhishek Dhawan$^\dagger$}
\address{$^\dagger$Department of Mathematics, University of Illinois Urbana--Champaign}
\email{adhawan2@illinois.edu}
\thanks{The research of the second author is supported by NSF RTG grant DMS-1937241.}

\author{Jonathan A. Noel$^\ddagger$}
\address{$^\ddagger$Department of Mathematics and Statistics, The University of Victoria}
\thanks{The research of the third author is supported by NSERC Discovery Grant RGPIN-2021-02460.}
\email{noelj@uvic.ca}

\date{}

\begin{document}

\begin{abstract}
    Martinsson and Steiner recently proved that the fractional chromatic number of any $d$-degenerate triangle-free graph $G$ satisfies $\chi_f(G) = O\left(\frac{d}{\log d}\right)$. They further conjectured a sharp leading constant $1 + o(1)$. In this paper, we confirm their upper bound conjecture for graphs having girth at least $5$. Our proof is constructive: it gives an efficient randomized algorithm that, with high probability, computes a fractional coloring of weight at most $(1 + o(1))\frac{d}{\log d}$ in such graphs.
    
    Furthermore, we establish their conjectured lower bound in a stronger form: for any constant $g \ge 4$, there exist $d$-degenerate graphs having girth at least $g$ with $\chi_f(G) \ge (1 - o(1))\frac{d}{\log d}$. This lower bound is achieved by analyzing a random graph based on the uniform attachment model. Notably, our results reveal that this model lacks the typical computational complexity barriers found in Erd\H{o}s-R\'enyi graphs, where there is a conjectured factor-$2$ algorithmic gap for this problem. 
\end{abstract}

\maketitle

\section{Introduction}

\subsection{Background}

Understanding the chromatic number, denoted by $\chi(G)$, of a graph $G$ is a classical and well-studied topic in graph theory, beginning with the standard inequalities
\[\omega(G)\le\chi(G)\le \deg(G)+1\le\Delta(G)+1,\]
where $\omega(G)$ is the number of vertices in a largest clique of $G$, $\Delta(G)$ is the maximum degree of $G$, and $\deg(G)$ is the \emph{degeneracy} of $G$, defined by $\deg(G)\coloneqq \max_{G'\subseteq G}\delta(G)$. These inequalities can all be equal:
this occurs if and only if a component of $G$ is a clique on $\Delta(G)+1$ vertices.

It is then natural to ask what happens if $\omega(G)$ and $\deg(G)$, or $\Delta(G)$, are far apart: the first case to consider is $\omega(G)=2$, i.e., $G$ is triangle-free. A seminal result of Johansson~\cite{johansson} is that in this case we have $\chi(G)=O\big(\frac{\Delta}{\log\Delta}\big)$, saving a $\log$ factor from the trivial bound.\footnote{While commonly cited as a DIMACS Technical Report, no record of Johansson's manuscript exists. Instead, Molloy and Reed~\cite{MolloyReed} present a variant of Johansson's proof demonstrating that $\chi(G) \leq 160\frac{\Delta}{\log \Delta}$ for triangle-free $G$.} Here and throughout the paper, $\log$ denotes the natural logarithm. The implied constant was improved to $1+o(1)$ by Molloy~\cite{Molloy}, and analysis of random regular graphs shows that $\tfrac12+o(1)$, if true, would be best possible. Despite considerable work~\cite{AndersonBernshteynDhawan, Kttt, campos2023new, bernshteyn2019johansson, DKPS, bernshteyn2023counting, hurley2023uniformly} this factor-$2$ gap remains the state of the art.

Molloy's approach is to give a polynomial-time algorithm which finds a coloring of $G$ with $(1+o(1))\tfrac{\Delta}{\log\Delta}$ colors. In particular, such an algorithm necessarily finds an independent set in $G$ of size at least $(1-o(1))\frac{n\log\Delta}{\Delta}$, matching asymptotically a foundational result of Shearer~\cite{shearer}\footnote{Shearer proves this result in a stronger form with $\Delta$ replaced by the average degree of $G$.}. We do not know any construction of triangle-free graphs with independence number smaller than $(2-o(1))\frac{n\log\Delta}{\Delta}$, and a conjecture in computational complexity asserts that no randomized polynomial-time algorithm can find independent sets in all triangle-free graphs of size larger than $(1+\eps)\frac{n\log\Delta}{\Delta}$ for any $\eps > 0$~\cite{Karp}. As all current arguments yield efficient algorithms, any improvement on Molloy's result would require a very different approach assuming this conjecture.

It is worth pointing out that we know two quite different constructions which do achieve independence number $(2-o(1))\frac{n\log\Delta}{\Delta}$. This is the independence number of the random $\Delta$-regular graph~\cite{FL}, and if $\Delta\ll\sqrt{n}$ it is not hard to show that we can delete a few edges to remove all triangles without affecting the independence number much; it is also the independence number of the triangle-free process (which is quite similar)~\cite{FGM,BohmanKeevash}. This construction fails badly when $\Delta$ is larger: recently Hefty, Horn, King, and Pfender~\cite{HHKP} gave a triangle-free construction that attains independence number $(2-o(1))\frac{n\log\Delta}{\Delta}$ for slightly larger $\Delta$, which in particular achieves the current best lower bound on the Ramsey number $R(3,k)$. 

A natural question to ask is then whether we can improve bounds on $\chi(G)$ by considering the degeneracy instead of the maximum degree. Unfortunately, as observed by Alon, Krivelevich, and Sudakov~\cite{AKS}, the answer is no: a variant of Tutte's construction gives triangle-free graphs of degeneracy $d$ whose chromatic number is the maximum $d+1$. It is interesting to note, though, that the graphs they construct have superexponential in $d$ many vertices, and Brada\v{c}, Fox, Steiner, Sudakov, and Zhang~\cite{BFSSZ} showed that this is necessary: if we add the condition $n\le e^{d^{1-\eps}}$ for any $\eps>0$, a generalization of Johansson's result holds in terms of degeneracy, i.e., $\chi(G) = O\big(\frac{d}{\log d }\big)$.

Since the chromatic number can be large for degenerate triangle-free graphs, it is natural to ask whether the same still holds for the LP-relaxation, the fractional chromatic number $\chi_f(G)$. This is defined to be the solution to the following linear program:
\begin{mini}
    {x \in \R^{|\mathcal{I}(G)|}}{\sum_{I \in \mathcal{I}(G)}x_I}{\label{eq:chif}}{}
    \addConstraint{\sum_{v\in I \in \mathcal{I}(G)}x_I}{\geq 1}{\qquad v \in V(G)}
    \addConstraint{x_I}{\geq 0}{\qquad I \in \mathcal{I}(G)}
\end{mini}
(Here, $\mathcal{I}(G)$ denotes the collection of all independent sets of $G$.)
As usual, observe that a proper coloring of $G$ gives an integer feasible solution to this linear program, and hence $\chi_f(G)\le\chi(G)$.
Furthermore, we note that a feasible solution $x$ to \eqref{eq:chif} with value $k$ yields a probability distribution $\mathcal{D}$ over $\mathcal{I}(G)$ satisfying $\Pr_{I \sim \mathcal{D}}[v\in I] \geq 1/k$; namely, select independent set $I$ with probability $x_I/k$.

An obvious lower bound on the fractional chromatic number (and hence the chromatic number) is $\chi(G)\ge \chi_f(G)\ge\tfrac{n}{\alpha(G)}$, where $\alpha(G)$ is the independence number of $G$. For the best construction known of bounded-degree triangle-free graphs with high chromatic number, coming from random regular graphs, these inequalities are tight, and Molloy's bound remains asymptotically the best we can do in terms of bounding the fractional chromatic number of triangle-free graphs with given maximum degree. In other words, moving to the fractional chromatic number does not help us when we study triangle-free graphs with given maximum degree. Furthermore, we expect there to be a computational complexity barrier to improving the upper bound in this setting: an efficient randomized algorithm should not be able to improve.

Nevertheless, it is not too hard to check that the Alon-Krivelevich-Sudakov construction does not have large fractional chromatic number, and Harris~\cite{harris} conjectured that $\chi_f(G)=O\big(\frac{d}{\log d}\big)$ for $d$-degenerate triangle-free graphs. This was proved by Martinsson and Steiner~\cite{MartStein}, who showed
\begin{align}\label{eq: MartStein}
    \chi_f(G)\le(4+o(1))\frac{d}{\log d}
\end{align}
holds for all $d$-degenerate triangle-free graphs. They also observed that, since a $d$-regular graph is $d$-degenerate, we cannot hope to improve the $4+o(1)$ below $\tfrac12+o(1)$. They posed the following conjecture for the correct answer.

\begin{conjecture}[{\cite[Conjecture~5.2]{MartStein}}]\label{conj:MS}
The following holds for any sufficiently large $d$.
\begin{enumerate}[label=\ep{\roman*}]
    \item\label{conj: ub} $\chi_f(G)\le(1+o(1))\frac{d}{\log d}$ for all $d$-degenerate triangle-free graphs $G$.
    \item\label{conj: lb} There exists a triangle-free $d$-degenerate graph $G$ with $\chi_f(G)\ge(1-o(1))\frac{d}{\log d}$.
\end{enumerate}
\end{conjecture}

In this paper, we resolve Conjecture~\ref{conj:MS}\ref{conj: lb}. In fact, we construct a $d$-degenerate graph $G$ having girth at least $g$ for constant $g \geq 4$ with $\chi_f(G)\ge(1-o(1))\frac{d}{\log d}$. Additionally, we establish Conjecture~\ref{conj:MS}\ref{conj: ub} under the additional restriction that $G$ has girth at least $5$, i.e., that there are no triangles or $4$-cycles in $G$. Notably, our upper bound proof yields an efficient randomized algorithm that returns a fractional coloring.
In light of our construction, there is no computational complexity barrier in our random model.
This is not entirely unsurprising; we discuss this further in the concluding remarks, Section~\ref{section: concluding remarks}.

\begin{theorem}[Informal versions of Theorems~\ref{theo: girth 5} and \ref{theo: rg}]\label{theo: informal}
    The following hold for any $g \geq 4$ and sufficiently large $d$.
    \begin{enumerate}[label=\ep{\roman*}]
        \item\label{informal ub} $\chi_f(G)\le(1+o(1))\frac{d}{\log d}$ for all $d$-degenerate graphs $G$ having girth at least $5$.
        \item\label{informal lb} There exists a $d$-degenerate graph $G$ having girth at least $g$ with $\chi_f(G)\ge(1-o(1))\frac{d}{\log d}$.
    \end{enumerate}
\end{theorem}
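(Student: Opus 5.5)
We treat the two parts separately: part (i) is algorithmic and part (ii) comes from a random construction.

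\emph{Upper bound \ref{informal ub}.} Fix a degeneracy ordering $v_1,\dots,v_n$ in which each vertex has at most $d$ neighbours appearing after it (its ``out-neighbours''). We build a random independent set $I$ and aim for $\Pr[v\in I]\ge (1-o(1))\frac{\log d}{d}$ for every $v$. By the remark after \eqref{eq:chif}, this gives the desired fractional colouring.

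The natural scheme processes vertices in reverse degeneracy order, from $v_n$ to $v_1$. Each vertex $v$ sees the already-decided states of its at most $d$ out-neighbours, and $v$ is placed in $I$ with a probability that depends on the ``pressure'' from those out-neighbours. Concretely, I would use a hard-core-type rule, in the spirit of Martinsson--Steiner and Molloy's entropy-compression/coupling arguments. We run a nibble in which each vertex $u$ carries a weight $p_u$, and $u$ is activated with probability $p_u$. Vertex $v$ is kept if it is activated and no active out-neighbour is kept. The $p_u$ are tuned so that the ``survival'' products $\prod_{u\in N^+(v)}(1-p_u)$ stay $\approx d^{-1+o(1)}$-balanced.

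Girth at least $5$ is used to make the events ``$u$ kept'' for $u\in N^+(v)$ nearly independent. Two out-neighbours of $v$ have no common neighbour besides $v$, so their histories share little. This lets us estimate $\Pr[v\in I]$ via a product bound, as in Shearer's local analysis. A Shearer-type optimisation of $\mathbb E|N^+(v)\cap I|$ against $\Pr[v\in I]$ then gives the constant $1$. The alternative would be to lose the factor $4$ that Martinsson--Steiner lose through a cruder union bound.

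\emph{Lower bound \ref{informal lb}.} We use a uniform attachment model: vertex $i$ chooses $d$ uniformly random earlier vertices as neighbours. This graph is $d$-degenerate by construction. Since $\chi_f\ge n/\alpha$, it suffices to show that whp $\alpha(G)\le (1+o(1))\frac{n\log d}{d}$, and then delete a small number of vertices to kill short cycles. For the first step we take a first moment over sets $S$ of size $k=(1+\eps)\frac{n\log d}{d}$. A vertex $i$ with $|S\cap[i-1]|=s$ avoids $S$ with probability about $(1-s/i)^d$. Since $S$ can be spread out, we need to bound $\sum_i s_i/i$ from below over all placements. For ``late'' vertices this is roughly $k/n$ times a log-weight, and choosing $n$ appropriately relative to $d$ makes the entropy $\binom nk$ lose to $\exp(-d\sum_{i\in S}s_i/i)$. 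Short cycles of length $<g$ number only $O(d^{g})$ in expectation, which is negligible when $n\gg d^{g}$. Deleting one vertex per cycle keeps the graph $d$-degenerate and barely changes $n/\alpha$.

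The main obstacle is the upper bound: we must get exact leading constant $1$ with only degeneracy control. Vertices may have huge in-degree, so the influence on $v$ from its in-neighbours must be shown harmless. I would first try a ``processing in reverse order with fresh randomness'' coupling, where decisions about $v$ depend only on out-neighbourhood information. This isolates the in-neighbour effects, and for those effects I would use girth $5$ to control correlations.
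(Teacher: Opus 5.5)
\textbf{Part (ii).} The step that fails is ``it suffices to show that whp $\alpha(G)\le(1+o(1))\frac{n\log d}{d}$.'' For the uniform attachment graph this is false. No relation between $n$ and $d$ rescues your first-moment count, because sets concentrated on late vertices pay only half the penalty. If $S$ lies among the last $\beta n$ vertices and $|S|=k$, then $\sum_{j\in S}|S\cap[j-1]|/j\le k^2/(2(1-\beta)n)$, while an evenly spread $S$ pays about $k^2/n$. Take $\beta=1/\log d$ and $k=(1+\varepsilon)\frac{n\log d}{d}$. Then $\log\binom{\beta n}{k}\ge k(\log d-2\log\log d-O(1))$, against a penalty of $(1+o(1))\frac{1+\varepsilon}{2}k\log d$. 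So the expected number of independent $k$-sets is $\exp(\Omega(k\log d))$ for every fixed $\varepsilon<1$. The first moment therefore certifies nothing better than roughly $\alpha\le\frac{2n\log d}{d}$, i.e.\ $\chi_f\ge(\frac12-o(1))\frac{d}{\log d}$, which random $d$-regular graphs already give. The paper says explicitly that $U_{n,d}$ has independent sets considerably larger than $\frac{n\log d}{d}$, sitting mostly in the second half of the order, so $n/\alpha(G)$ cannot reach constant $1$.

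The missing idea is weak LP duality with a non-uniform fractional clique that down-weights late vertices. The paper takes $f(i)\propto\ell_n(i)=H_n-H_{i-1}$, for which $\ell_n([n])=n$. It proves the deterministic inequality $\sum_{j\in S}\frac{|S\cap[j]|}{j}\ge\frac{\ell_n(S)^2+|S|^2}{2n}$. It then runs the first moment over independent sets with large $\ell_n(S)$ rather than large $|S|$; the $|S|^2$ term absorbs the entropy uniformly in $|S|$. This gives $\ell_n(I)\le(1+\varepsilon)\frac{n(W(d)+1)}{d}$ for all independent $I$ whp, hence $\chi_f\ge(1-\delta)\frac{d}{W(d)+1}$. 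A smaller point: the expected number of $r$-cycles is $O(d^rH_n)$, not $O(d^r)$. With weights up to $H_n$ the deleted set $R$ must have small $\ell_n(R)$, which is why the paper assumes $d^{g-1}$ is at most about $\delta n/H_n^2$.

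\textbf{Part (i).} Your sketch lacks the mechanism that gives $\Pr[v\in I]\ge(1-o(1))\frac{\log d}{d}$ for \emph{every} $v$, which a fractional colouring needs. The in-degree issue you single out is not the difficulty: in any sequential scheme, vertices processed after $v$ cannot affect whether $v$ is selected. The difficulty is the weights. With a common fixed activation probability $p$, a vertex whose $d$ out-neighbours have no out-neighbours is kept with probability $p(1-p)^d\le\frac{1}{ed}$. So the weights must react to the history, and saying they are ``tuned'' leaves the problem open.

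The paper processes vertices in degeneracy order. When $v_i$ is not selected, each later neighbour's weight is multiplied by $(1-p(v_i))^{-1}$; when $v_i$ is selected, that weight is set to $0$. This makes $\mathbb{E}[p_{k-1}(v_k)]=\alpha$ exactly for every $k$, with no independence assumptions. Weights are capped at $\hat p=d^{-\varepsilon/20}$, with an equalising coin flip that preserves the martingale, so $\Pr[v_k\in I]=\alpha-\hat p\Pr[v_k\text{ capped}]$. The cap probability is controlled through the second moment $\mathbb{E}[p_{k-1}(v_k)^2]\le\alpha^2e^{(1+\varepsilon/20)d\alpha}$, where $\alpha=\frac{\log d}{(1+\varepsilon/2)d}$. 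This bound comes from the supermartingale $S_i=p_i(v_k)^2\prod_j\bigl(1+(1+O(\varepsilon))p_i(v_j)\bigr)$, with the product over neighbours $v_j$ of $v_k$ with $i<j<k$. $C_4$-freeness is used only here: it ensures a processed vertex is adjacent to at most one such $v_j$, so each step changes only boundedly many factors.

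Your proposed use of girth $5$ does not hold up. It relies on near-independence of the events ``$u$ kept'' for the out-neighbours $u$ of $v$, followed by a Shearer-type optimisation. But girth $5$ only excludes common neighbours, and those events remain correlated through longer paths. Moreover, without a maximum-degree bound there is no local-occupancy argument that turns an averaged Shearer estimate into the uniform per-vertex bound needed here.
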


It is reasonable to ask what exactly is meant by an efficient algorithm which returns a vector indexed by exponentially many independent sets. In this paper, the answer is as follows. For $\alpha \in [0, 1]$ and $q \in \N$, we define an $(\alpha,q)$-coloring of $G$ to be a collection of $q$ independent sets in $G$, such that each vertex is in at least $\alpha q$ of the independent sets. Observe that given an $(\alpha,q)$-coloring of $G$, assigning the weight $1/(\alpha q)$ to each of the given $q$ independent sets (and weight $0$ on all other independent sets of $G$) is a feasible solution to~\eqref{eq:chif} with value $1/\alpha$, and hence a witness $\chi_f(G)\le 1/\alpha$. Our algorithm will, with high probability, compute an $(\alpha,q)$-coloring of $G$ in $\poly(n,d)$ time where $q$ is roughly $\frac{d\log n}{\log d}$, which in particular is a $\poly(n)$-sized witness of the fractional chromatic number. Of course, it is easy to sample efficiently from this distribution.

\subsection{Main results}

In this section, we provide formal statements of our main results.
We begin with the upper bound, i.e., a formal version of Theorem~\ref{theo: informal}\ref{informal ub}.
In fact, we prove the bound for $C_4$-free graphs, a weaker condition than having girth at least $5$.

\begin{theorem}\label{theo: girth 5}
    For all $0<\eps < 1$ there exists $d_0 \in \N$ such that the following holds for $d \geq d_0$ and $n \in \N$.
    Let $G$ be an $n$-vertex $d$-degenerate $C_4$-free graph.
    Then, \[\chi_f(G) \leq (1+\eps)\dfrac{d}{\log d}.\]
    Moreover, there is a $\poly(n,d)$-time randomized algorithm that, with high probability, outputs a $\left((1-\eps)\frac{\log d}{d},\,q\right)$-coloring of $G$ with $q=\Theta_\eps\left(d\frac{\log n}{\log d}\right)$.
\end{theorem}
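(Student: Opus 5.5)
We plan to construct, via a randomized procedure, a distribution over independent sets of $G$ in which every vertex lies in the set with probability at least $(1-\eps)\frac{\log d}{d}$. We then repeat it $q=\Theta_\eps(d\log n/\log d)$ times independently and apply a Chernoff bound plus a union bound over the $n$ vertices to obtain the $(\alpha,q)$-coloring. Since each run succeeds for a fixed vertex with probability $\alpha$, taking $q$ runs with a slightly smaller target $\alpha$ suffices. The substance is therefore the single-sample procedure.

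Fix a degeneracy ordering $v_1,\dots,v_n$, so that each $v_i$ has at most $d$ neighbours $v_j$ with $j<i$ (its \emph{back-neighbours}). Process the vertices in this order, in the spirit of the Martinsson--Steiner argument, but aim for a sharp constant via a nibble-type or hard-core-type activation. Each vertex $v$ receives a probability $p_v$, computed from the already-determined behaviour of its back-neighbours, and is included with probability $p_v$ provided none of its back-neighbours is included. We tune this so that $\Pr[v\in I]\approx \frac{\log d}{d}$ uniformly.

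Concretely, we maintain for each vertex the \emph{conditional} probability that no back-neighbour is in $I$, given the random choices made so far. We insert $v$ with probability $\min\{1,\, \frac{\lambda}{\Pr[\text{no back-nbr in } I]}\}$, with $\lambda=(1-\eps)\frac{\log d}{d}$. This forces the marginal to equal $\lambda$ whenever the truncation at $1$ is inactive.

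The key step is to show that, with high probability over earlier randomness, the quantity $Q_v=\Pr[\text{no back-neighbour of } v \text{ in } I \mid \mathcal F]$ is at least about $d^{-1+\eps/2}$. That is, morally $Q_v\approx\prod_u(1-\lambda)\ge(1-\lambda)^d\approx d^{-(1-\eps)}$. Then the required insertion probability $\lambda/Q_v\le d^{-\eps/2}\log d<1$ is never truncated.

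Here $C_4$-freeness is essential. Two back-neighbours $u,w$ of $v$ share no common neighbour besides $v$, and $v$ comes after both. Hence the events $\{u\in I\}$ for $u\in N^-(v)$ are nearly independent: their dependence only propagates through longer paths. So a product/Janson-type or FKG/correlation estimate should give $Q_v\ge(1-o(1))(1-\lambda)^{d}$. To make this rigorous, I would instead run the process in $T$ rounds of a nibble, letting each vertex activate with a small probability $\theta$ in each round. I would then track, via martingale concentration, the random variable ``current survival weight'' $\prod_{u\in N^-(v)}(1-\text{activation prob of }u)$. $C_4$-freeness bounds the effect of any single earlier choice on this product by $O(\lambda)$ per neighbour, giving concentration via Azuma/Talagrand.

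The main obstacle is this concentration/near-independence step. Forward neighbours are unbounded in number and may have arbitrarily high degree, so we cannot use the maximum-degree-based local lemma arguments of Molloy. My first attempt is to show the bad events (some $Q_v$ too small) have probability $\le d^{-\omega(1)}$ while each vertex's inclusion only requires its own $Q_v$ to be good. Then no union bound over the graph is needed: a vertex with bad $Q_v$ is simply inserted with the truncated probability, costing only an $o(\lambda)$ loss in its marginal. Polynomial running time follows since each conditional probability is estimated (or tracked exactly in the nibble) in $\poly(n,d)$ time.
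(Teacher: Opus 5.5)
\textbf{There is a genuine gap in the single-sample procedure, which is where the real work of the theorem lies.} Your outer layer (independent runs, Chernoff, union bound over vertices) matches the paper.

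Read literally, $Q_v=\Pr[N^-(v)\cap I=\emptyset\mid\text{choices so far}]$ is $0$ or $1$ at the moment $v$ is processed. The rule then degenerates to ``insert with probability $\lambda$ if unblocked'', and the marginal is $\lambda\Pr[N^-(v)\cap I=\emptyset]\ll\lambda$. The natural repair, close to your ``survival weight'', is to give $v$ the weight $\lambda\prod_{u\in N^-(v)}(1-p_u)^{-1}$ (set to $0$ if a back-neighbour is chosen). This is a martingale in the processing order.

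Everything then hinges on showing that this weight is rarely large, and you only argue this heuristically. The tools you name do not deliver it:
\begin{itemize}
\item \emph{Near-independence fails.} The events $\{u\in I\}$, $u\in N^-(v)$, are not nearly independent in any sense usable by FKG or Janson. FKG needs monotone events in a product space, and this process has no such structure. Janson bounds $\Pr[\text{no event}]$ from above, not below. Correlations also travel through earlier vertices along paths of length $3$ and more. Finally, $C_4$-free graphs may contain triangles, so two back-neighbours of $v$ can be adjacent and their events disjoint.
\item \emph{The Lipschitz bound is wrong.} A single flip at $x\in N^-(u)$ can turn the factor $1-p_u$ into $1$. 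That moves $\log(1-p_u)$ by at least $p_u$, which is not $O(\lambda)$. With truncation only at $1$, $p_u$ can be of order $1$. The effect of one flip also cascades forward along arbitrarily long chains.
\item \emph{Azuma is too weak even if the bound were true.} Granting an $O(\lambda)$ Lipschitz constant, the roughly $d^2$ flips at distance two permit fluctuations of order $\sqrt{d^2\lambda^2}=\Theta(\log d)$ in $\sum_u p_u$. That is the same order as its mean $\lambda d$, so you cannot certify $Q_v\ge d^{-1+\eps/2}$.
\item \emph{Truncating at $1$ is too weak.} Any such analysis needs $(1-p_u)^{-1}\le 1+(1+o(1))p_u$, which requires all weights to stay $o(1)$.
\end{itemize}

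\textbf{The paper supplies exactly these missing pieces.}
\begin{itemize}
\item \emph{Capped weights.} Weights start at $\alpha=\frac{\log d}{(1+\eps/2)d}$ and are capped at $\hat p=d^{-\eps/20}$.
\item \emph{Equalizing coin flips.} When a neighbour of a vertex near the cap is chosen, a coin sets that vertex's weight to $\hat p$ or $0$, so the martingale property survives the cap.
\item \emph{Frozen bad vertices.} Capped (``bad'') vertices are frozen and never selected. Hence $\Pr[v_k\in I]=\alpha-\hat p\Pr[v_k\text{ bad}]$.
\item \emph{Second moment instead of concentration.} It suffices to bound $\E[p_{k-1}(v_k)^2]$, since $\hat p\Pr[v_k\text{ bad}]\le \E[p_{k-1}(v_k)^2]/\hat p$.
\item \emph{Supermartingale.} The second moment is controlled by showing that $S_i=p_i(v_k)^2\prod_j\bigl(1+(1+O(\eps))p_i(v_j)\bigr)$ is a supermartingale, where the product runs over unprocessed back-neighbours $v_j$ of $v_k$. $C_4$-freeness is used exactly to guarantee that processing any $v_i$ alters at most one of these factors (besides $p(v_k)$ itself). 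This makes the one-step check a direct computation.
\end{itemize}
This yields $\E[p_{k-1}(v_k)^2]\le\alpha^2e^{(1+\eps/20)d\alpha}$, and hence a loss of only $o(\alpha)$. A merely polynomially small probability of badness is enough, so your $d^{-\omega(1)}$ target is unnecessary as well as unproved.
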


We note that by a result of Esperet, Kang, and Thomass\'e~\cite[Theorem~3.1]{EKT}, a corollary of Theorem~\ref{theo: girth 5} is that any $C_4$-free graph $G$ with minimum degree $d$ contains an induced bipartite subgraph with average degree at least $(1-o(1))\log d$. To see that this is true, observe that we can assume $G$ is minimal subject to having minimum degree at least $d$, since otherwise we can pass to a proper induced subgraph with the same conditions. Such a graph is $d$-degenerate and hence our result applies. It would be interesting to know whether average degree $(1-o(1))\log d$ is optimal.

The corresponding lower bound statement in fact allows us to construct degenerate graphs with arbitrarily high girth and large fractional chromatic number. Recall that the \emph{Lambert $W$-function} is defined by $W(d)e^{W(d)}=d$ for $d>0$ and the \emph{$n$-th harmonic number} for $n\geq1$ is $H_n\coloneqq \sum_{j=1}^n\frac{1}{j}$. It is well known that $W(d)=\log d - \log \log d + o(1)$ and $H_n=\log n+O(1)$ for large $d$ and $n$.

\begin{theorem}\label{theo: rg}
For any $g\ge4$ and $0<\delta< 1$, suppose that $n$ and $d$ satisfy $n\ge d\ge 2$ and
\begin{equation}\label{eq:rg}\frac{1}{g-1}\binom{2g-4}{g-2}d^{g-1}\le\delta\frac{n}{H_n^2}\,.\end{equation}
Then there exists a $d$-degenerate graph $G$ with at most $n$ vertices and girth at least $g$ such that 
\[\chi_f(G)\geq (1-\delta)\frac{d}{W(d)+1}\,.\]
\end{theorem}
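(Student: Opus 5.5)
We will analyse a random graph from a uniform attachment model. Vertices $1,\dots,n$ arrive in order, and each vertex $v>d$ chooses $d$ neighbours independently and uniformly among $\{1,\dots,v-1\}$; repeated choices are allowed, and the first $d$ vertices can simply be joined to all earlier vertices. Every vertex then has at most $d$ neighbours among earlier vertices, so this graph $G_0$, and every subgraph of it, is $d$-degenerate. To get girth at least $g$ we delete one vertex from every cycle of length less than $g$, and we aim to show that at most $\delta n/2$ vertices are deleted.

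\textbf{Short cycles.} For a cycle $v_1\cdots v_\ell$, each edge is oriented from its later endpoint to its earlier one. The probability that an edge $uw$ with $u>w$ appears is at most $d/(u-1)$. Summing over all orderings of the cycle, the expected number of $\ell$-cycles should be bounded by $d^\ell$ times a sum of products of $1/\text{index}$ terms. The combinatorial factor $\frac{1}{g-1}\binom{2g-4}{g-2}$ is (up to indexing) a Catalan number. This suggests counting the possible up/down patterns of the orientations along the cycle, with each vertex contributing at most $H_n^2$-type harmonic sums. The target is an expectation of at most $C\,d^{g-1}H_n^2$ summed over $\ell<g$. The hypothesis \eqref{eq:rg} makes this at most $\delta n$ (up to constants absorbed by adjusting $\delta$), and Markov's inequality lets us fix an outcome with few short cycles. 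I would also remove multi-edges, which are $2$-cycles, in the same way.

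\textbf{Fractional chromatic number.} Since $\chi_f(G)\ge |V(G)|/\alpha(G)$, it suffices to show that w.h.p.\ $\alpha(G_0)\le (1+\delta/2)\frac{W(d)+1}{d}n$. Deleting at most $\delta n/2$ vertices changes the vertex count only slightly, and $\alpha$ does not increase. I would use a first-moment argument over sets $S$ of size $s=\beta n$. The event that $S$ is independent is that each $v\in S$ picks all $d$ of its choices outside $S$, which has probability $\prod_{v\in S}(1-|S\cap[v-1]|/(v-1))^d$.

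The plain union bound $\binom{n}{s}\Pr[S \text{ independent}]$ is too crude because $S$ is arbitrary. Instead I would sum exactly over sets using the density profile: encode $S$ by its indicator sequence and compute $\sum_S\prod_{v}\cdot$ via a continuous approximation. Writing $x(t)$ for the fraction of $[tn]$ lying in $S$, the log-count is $n\int_0^1 \bigl[H(x'(t))+x'(t)\,d\log(1-x(t)/t)\bigr]\,dt$, where $H$ is binary entropy. Maximising this over profiles with $x(1)=\beta$ and showing that the maximum is negative when $\beta>(1+\eta)\frac{W(d)+1}{d}$ is where $W$ enters. With constant density $x(t)=\beta t$, the functional is $H(\beta)+\beta d\log(1-\beta)\approx \beta\log(e/\beta)-d\beta^2$, which vanishes at $\beta\approx (\log d-\log\log d+1)/d=(W(d)+1)/d$ to leading order. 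The precise equation $\log(e/\beta)=d\beta$ has solution $\beta=W(ed)/d$, consistent with the claimed bound.

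\textbf{Main obstacle.} The hardest step is proving that non-uniform profiles do no better, and doing so with explicit rather than asymptotic error so that the conclusion holds for every $n\ge d$ satisfying \eqref{eq:rg}. I would try a concavity argument: by scale invariance, concentrating density early or late should not help, because each vertex's constraint depends only on its local density ratio $x(t)/t$. The discrete version would be handled by a dynamic programming or transfer bound, summing over $S$ vertex by vertex with the state equal to $|S\cap[v-1]|$. This gives a generating-function bound in place of the integral.
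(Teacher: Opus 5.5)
\textbf{There is a genuine gap: the reduction $\chi_f(G)\ge|V(G)|/\alpha(G)$ cannot reach the target in this model.} The step you flag as the main obstacle is not merely hard but false. Back-loaded profiles beat the uniform one, because a late vertex only sees the part of $S$ that precedes it.

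Concretely, in your own model let $S$ range over $s$-subsets of $(n/2,n]$ with $s=\beta n$. A vertex $v\in S$ then sees only a fraction of about $2\beta(1-\frac{n}{2v})$ of $[v-1]$ lying in $S$. Averaging $\log\Pr[S\text{ independent}]$ over such $S$ (Jensen) gives
\[
\frac1n\log\mathbb{E}\bigl[\#\{\text{independent sets of size } s\}\bigr]\;\ge\;\beta\log\frac{1}{2\beta}-(1+o(1))\,2(1-\log 2)\,d\beta^2 .
\]
For large $d$ this is positive for all $\beta$ up to about $\frac{1}{2(1-\log 2)}\cdot\frac{\log d}{d}\approx 1.63\,\frac{\log d}{d}$. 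Using only the last $\theta n$ vertices, with $\theta$ small, pushes the threshold close to $2\frac{\log d}{d}$; these are exactly your profiles with $x\equiv 0$ up to some $t_0$. Your dynamic-programming or transfer bound evaluates precisely this first moment, so no refinement of it can yield $\alpha(G_0)\le(1+\delta/2)\frac{W(d)+1}{d}n$.

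Heuristically, $\alpha(G_0)$ really is about $2\frac{n\log d}{d}$, since the graph induced on the last $\theta n$ vertices behaves like a sparse random graph of average degree about $\theta d$. So $|V(G)|/\alpha(G)$ can only reach the trivial $\frac{d}{2\log d}$ regime. This is why the paper notes that its bound on $\chi_f$ does not follow from a bound on $\alpha$.

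\textbf{The missing idea is weak LP duality with a fractional clique that discounts late vertices.} The paper sets $f(i)\propto\ell_n(i)=\sum_{j=i}^{n}\frac1j$, normalised via $\sum_{i}\ell_n(i)=n$. It then shows that, with probability at least $1-\varepsilon$, every independent set $I$ of $U_{n,d}$ has $\ell_n(I)\le(1+\varepsilon)\frac{n(W(d)+1)}{d}$. Back-loaded independent sets are harmless here because their $\ell_n$-weight is small.

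The key input is the deterministic inequality $\sum_{j\in S}\frac{|S\cap[j]|}{j}\ge\frac{\ell_n(S)^2+|S|^2}{2n}$, proved by induction on $n$. It turns $\Pr[S\text{ independent}]\le\exp\bigl(-d\sum_{j\in S}\frac{|S\cap[j-1]|}{j}\bigr)$ into $\exp\bigl(dH_n-\frac{d}{2n}(\ell_n(S)^2+|S|^2)\bigr)$. The $|S|^2$ half pays for the entropy $\binom{n}{s}$, since $\max_{x}\bigl[x\log\frac1x+x-\frac d2x^2\bigr]=\frac{(W(d)+1)^2-1}{2d}$, attained at $x=W(d)/d$. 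The $\ell_n(S)^2$ half then makes the first moment tiny once $\ell_n(S)\ge(1+\varepsilon)\frac{n(W(d)+1)}{d}$.

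This also accounts for the $H_n^2$ in \eqref{eq:rg}, which you attributed to the cycle count. The expected number of $r$-cycles is only at most $\frac{1}{2r}\binom{2r-2}{r-1}d^rH_n$. The second factor $H_n=\ell_n(1)$ is the most weight a deleted vertex can cost, since what must be controlled is $\ell_n(R)$ rather than $|R|$.
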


We note that the graphs we construct have independent sets larger than $\frac{n}{\chi_f(G)}$.
In particular, the bound on $\chi_f(G)$ does not follow immediately from an upper bound on $\alpha(G)$.
The condition on the degeneracy is essential in exhibiting this gap.
We discuss this further in the following subsection, where we provide an informal overview of our proof techniques.

\subsection{Proof overview}\label{subsection: proof overview}

To prove the upper bound (Theorem~\ref{theo: girth 5}), we use an algorithm developed by the second author~\cite{Dhawan} to give an alternative proof (and several extensions) of Martinsson and Steiner's result~\cite{MartStein}. As we discuss in the concluding remarks, however, this algorithm cannot improve the $4+o(1)$ value in~\eqref{eq: MartStein} without the additional assumption of $C_4$-freeness. The novelty in this paper is to show how to use $C_4$-freeness in the analysis to permit optimal constant choices in the algorithm and hence the sharp upper bound.

For the lower bound, we use a random construction based on the \emph{uniform attachment model} studied in, e.g.,~\cite{FriezePerezGimenezPralatReiniger19,MalyshkinZhukovskii22,AcanPittel20}. The rough idea is to let the vertex set be $[n]=\{1,\dots,n\}$, and for each vertex $i$ choose a random set of $d$ vertices in $[i-1]$ to be adjacent to $i$. It is not hard to see that (if $d$ is not too large with respect to $n$) this random construction is likely to contain very few short cycles and we can remove them by deleting $o(n)$ many vertices.

Interestingly, to provide a lower bound on the fractional chromatic number of this random construction $G$, it is not enough to use the inequality $\chi_f(G)\ge\frac{n}{\alpha(G)}$ as with regular graphs. The random construction contains independent sets on considerably more than $\frac{n\log d}{d}$ vertices, but these independent sets all contain many vertices in $\{n/2,\dots,n\}$ and few in $\{1,\dots,n/2\}$, so that they do not fractionally pack to make a fractional coloring. See the concluding remarks, Section~\ref{section: concluding remarks}, for further discussion.

Instead, we make use of LP-duality. The dual program to the fractional chromatic number is the \emph{fractional clique number} $\omega_f(H)$, which is defined to be the solution to the following linear program.

\begin{maxi}
    {f \in \R^{|V(H)|}}{\sum_{v\in V(H)}f_v}{\label{eq:fracclique}}{}
    \addConstraint{\sum_{v\in I}f_v}{\leq 1}{\qquad I \in \mathcal{I}(H)}
    \addConstraint{f_v}{\geq 0}{\qquad v \in V(H)}
\end{maxi}

As with the fractional chromatic number, an integer feasible assignment for this linear program is necessarily a clique in $H$, so $\omega(H)\le\omega_f(H)$. The strong duality theorem for linear programming states that $\omega_f(H)=\chi_f(H)$ for all graphs $H$, though what we will actually need is only the weak duality statement $\omega_f(H)\le\chi_f(H)$. We will give an explicit (and not random) assignment of values $f_v$ and argue that with high probability this assignment is a fractional clique in the random construction $G$.

\subsection{Notation and terminology}

Throughout the rest of the paper we use the following basic notation. For $n \in \N$, we let $[n] \defeq \set{1, \ldots, n}$.
For a graph $G$, its vertex and edge sets are denoted $V(G)$ and $E(G)$, respectively.
For a vertex $v \in V(G)$, $N_G(v) \coloneqq \set{u \in V(G)\,:\, uv \in E(G)}$ denotes the set of neighbors of $v$, and $d_G(v) \coloneqq |N_G(v)|$ denotes the degree of $v$; we drop the subscript $G$ when the context is clear.
For a subset $U \subseteq V(G)$, the subgraph induced by $U$ is denoted by $G[U]$.

We say an $n$-vertex graph $G$ is \emphd{$d$-degenerate} for $d\in \N$ if there exists an ordering $(v_1, \ldots, v_n)$ of $V(G)$ such that for each $i\in [n]$, we have $d_{G_i}(v_i) \leq d$, where $G_i \coloneqq G[\set{v_1, \ldots, v_i}]$.
Given a degeneracy ordering of $G$, we let $N_L(v_i)$ be the set of neighbors of $v_i$ in the graph $G_i$; we call such vertices \emph{left-neighbors} of $v_i$.
Similarly, we let $N_R(v_i)$ be the vertices $v_k$ such that $v_i \in N_L(v_k)$, i.e., the \emph{right-neighbors} of $v_i$. For a set $X$ of vertices, define $N_L(X):=\bigcup_{v\in X}N_L(v)$ and $N_R(X):=\bigcup_{v\in X}N_R(v)$.

\subsubsection*{Structure of the paper}
The rest of the paper is organized as follows: in Section~\ref{section: alg}, we prove Theorem~\ref{theo: girth 5}; in Section~\ref{section: rg}, we prove Theorem~\ref{theo: rg}; finally, in Section~\ref{section: concluding remarks}, we conclude with a discussion of our approach and outline potential future avenues of work.

\section{$C_4$-free graphs: proof of Theorem~\ref{theo: girth 5}}\label{section: alg}

In this section, we will prove Theorem~\ref{theo: girth 5}, which bounds the fractional chromatic number of $d$-degenerate $C_4$-free graphs.
For the reader's convenience, we restate the result below.

\begin{theorem*}[Restatement of Theorem~\ref{theo: girth 5}]
    For all $0<\eps < 1$ there exists $d_0 \in \N$ such that the following holds for $d \geq d_0$ and $n \in \N$.
    Let $G$ be an $n$-vertex $d$-degenerate $C_4$-free graph.
    Then, \[\chi_f(G) \leq (1+\eps)\dfrac{d}{\log d}.\]
    Moreover, there is a $\poly(n,d)$-time randomized algorithm that, with high probability, outputs a $\left((1-\eps)\frac{\log d}{d},\,q\right)$-coloring of $G$ with $q=\Theta_\eps\left(d\frac{\log n}{\log d}\right)$.
\end{theorem*}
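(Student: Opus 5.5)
The plan is to build a random procedure, following the approach of the second author~\cite{Dhawan}, that outputs a single independent set $I$ of $G$ with $\Pr[v\in I]\ge(1-\eps)\frac{\log d}{d}$ for every vertex $v$. Repeating it $q=\Theta_\eps(d\log n/\log d)$ times independently and applying a Chernoff bound together with a union bound over the $n$ vertices then gives, with high probability, an $\left((1-\eps)\frac{\log d}{d},q\right)$-coloring. (One can lose an extra $\eps/2$ in the target probability to absorb the concentration error.) Since an $(\alpha,q)$-coloring witnesses $\chi_f(G)\le 1/\alpha$, the first claim follows as well.

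The procedure processes the vertices along a degeneracy ordering $v_1,\dots,v_n$ and is designed so that each vertex's inclusion probability is computed from its left-neighbours only. Each vertex $v$ draws an independent uniform $U_v\in[0,1]$. Vertices are handled right to left. We then put $v\in I$ if $U_v\le p_v$ and no right-neighbour of $v$ has already been placed in $I$. Here the threshold $p_v$ is chosen adaptively, as in~\cite{Dhawan}, so that the marginal $\Pr[v\in I]$ equals a fixed target $\alpha$.

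The natural choice is to set a base rate and correct for the probability that $v$ is blocked. Since $v$ has at most $d$ left-neighbours but possibly many right-neighbours, it is cleaner to orient the process the other way. Go left to right: $v$ joins $I$ if it is activated and no left-neighbour is in $I$. Each $u$ has an activation probability $p_u$ chosen so that
\[
\Pr[u\in I]=p_u\cdot\Pr[\text{no left-neighbour of }u\text{ in }I]=\alpha .
\]
This is possible exactly when $\Pr[N_L(u)\cap I=\emptyset]\ge\alpha$. By the standard rejection-sampling device, $\Pr[N_L(u)\cap I=\emptyset]$ can be estimated efficiently, which keeps the procedure in $\poly(n,d)$ time.

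The heart of the proof is the following lower bound: for $\alpha=(1-\eps)\frac{\log d}{d}$,
\[
\Pr[N_L(u)\cap I=\emptyset]\ge\alpha .
\]
A union bound only gives $1-d\alpha$, which is useless. Instead, the plan is to show that the events $\{w\in I\}$ for $w\in N_L(u)$ are nearly independent, or negatively correlated, which would give
\[
\Pr[N_L(u)\cap I=\emptyset]\approx(1-\alpha)^d\approx d^{-(1-\eps)}\ge\alpha .
\]
This is exactly where $C_4$-freeness enters. Two left-neighbours $w,w'$ of $u$ share no common neighbour other than $u$, and $u$ comes later in the ordering. So at the time $w$ and $w'$ are decided, their left-neighbourhoods are disjoint, and the only dependence between them flows through longer paths. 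The argument should condition on the history before the first vertex of $N_L(u)$ and use an inductive invariant maintained by the algorithm: every conditional probability $\Pr[w\in I\mid\cdot]$ stays at most $\alpha(1+o(1))$, since $p_w\le\alpha/\Pr[\dots]$ is controlled. The product bound then follows, for instance via a Harris/FKG-type inequality for the monotone blocking events, or via a martingale argument revealing the vertices of $N_L(u)$ in order.

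The main obstacle is this near-independence estimate, specifically preventing positive correlations that arise through shared blocked left-neighbours. Without $C_4$-freeness, such correlations are what force the constant $4$ in~\eqref{eq: MartStein}. My first attempt would be to write $\Pr[N_L(u)\cap I=\emptyset]$ as a product over $w\in N_L(u)$ in the order of the degeneracy ordering, and bound each conditional factor $1-\Pr[w\in I\mid \text{earlier }w'\notin I]$. Conditioning on $w'\notin I$ can only raise the chance that the neighbours of $w'$ are in $I$. By $C_4$-freeness these neighbours do not lie in $N_L(w)$, apart from $u$, which has not yet been decided. The effect on $w$ should therefore be second order, of size $O(d\alpha^2)=o(\alpha)$ per factor. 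This should yield a product of at least $(1-\alpha(1+o(1)))^d$, which suffices once $d\ge d_0(\eps)$.
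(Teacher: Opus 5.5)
Your static scheme is a different algorithm from the paper's, and the reduction is fine. With $p_u=\alpha/\Pr[N_L(u)\cap I=\emptyset]$ every marginal is exactly $\alpha$, so everything rests on $\Pr[N_L(u)\cap I=\emptyset]\ge\alpha$. That inequality is the entire difficulty, and your sketch does not establish it.

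To see how tight it is, list $N_L(u)=\{w_1,\dots,w_d\}$ in order and write $E_j$ for the event $w_1,\dots,w_j\notin I$. You essentially need $\sum_j\Pr[w_j\in I\mid E_{j-1}]\le\log(1/\alpha)=\log d-\log\log d+O(1)$, while $\sum_j\alpha=(1-\eps)\log d$. So the conditional probabilities may exceed $\alpha$ by only about $\eps\alpha$ on average. Moreover, they are conditioned on intersections of up to $d-1$ events, of probability roughly $d^{-(1-\eps)}$, not on a single event $\{w'\notin I\}$.

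Your heuristic error term fails this test. $d\alpha^2=\alpha\cdot d\alpha\approx(1-\eps)\alpha\log d$ is not $o(\alpha)$, and summed over the $d$ factors it would cost $\Theta(\log^2 d)$ in the exponent. The tools you name do not apply either:
\begin{itemize}
\item Harris/FKG needs events monotone in the same direction in the independent coins. But $\{w\in I\}$ is decreasing in coins one level below $w$ and increasing in coins two levels below. A vertex $z\in N_L(w)$ can also lie in $N_L(x)$ for some $x\in N_L(w')$; this only uses the $5$-cycle $u\,w\,z\,x\,w'$, which $C_4$-freeness allows. Then $\{w\in I\}$ and $\{w'\in I\}$ move in opposite directions with $z$'s coin.
\item Negative correlation of the events $\{w\in I\}$ is the wrong direction, since it yields $\Pr[N_L(u)\cap I=\emptyset]\le\prod_w(1-\Pr[w\in I])$.
\item The invariant $\Pr[w\in I\mid\cdot\,]\le\alpha(1+o(1))$ fails for natural conditionings. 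Given $N_L(w)\cap I=\emptyset$ it equals $p_w$, which is about $\alpha d^{1-\eps}$ already when $G$ is a tree.
\end{itemize}

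The missing idea is what lets the paper avoid controlling such correlations at all: the weights are adaptive rather than fixed in advance. In Algorithm~\ref{algorithm: fcp}, $p_i(v_k)$ is multiplied by $(1-p_{i-1}(v_i))^{-1}$ each time a left-neighbour $v_i$ is rejected, and is truncated at $\hat p=d^{-\eps/20}$ with an equalizing coin flip. Hence $p_i(v_k)$ is a martingale (Lemma~\ref{lemma: potential}), and $\Pr[v_k\in I]=\alpha-\hat p\Pr[v_k\in B_{k-1}]$ holds exactly, however the left-neighbours are correlated. What remains is the second-moment bound $\E[p_{k-1}(v_k)^2]\le\alpha^2\exp((1+3\eps')d\alpha)$. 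It comes from the supermartingale $S_i$, where $C_4$-freeness is used only locally, via $|N_R(v_i)\cap N_L(v_k)|\le1$.

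Without that martingale structure, your scheme needs a genuine correlation-decay or Janson-type lower bound for the rare event $\{N_L(u)\cap I=\emptyset\}$ under non-monotone dependencies. That bound would also have to be robust to the sampling errors in the estimated $p_u$'s. The proposal does not provide one, so as it stands the central step is unproved.
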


Let us describe our coloring procedure, which relies on a key subroutine that samples an independent set in $G$ with specific properties (see Algorithm~\ref{algorithm: fcp}).
The algorithm takes as input an $n$-vertex $d$-degenerate $C_4$-free graph $G$ and a parameter $0<\eps < 1$, and outputs a random independent set $I$ in $G$. The key property of the algorithm is that for each $v\in V(G)$ we have $\Pr[v\in I]$ is very close to $\frac{\log d}{d}$. It follows that $q$ independent runs of the algorithm, if $q$ is sufficiently large, with high probability gives us a collection $\{I_c\,:\,c\in[q]\}$ of independent sets in $G$ which cover each $v\in V(G)$ at least $(1-\eps)\frac{\log d}{d}q$ times, and hence easily a $\left((1-\eps)\frac{\log d}{d},q\right)$-coloring of $G$. 

The basic idea of the algorithm is as follows. We initialize at time $0$ each vertex of $G$ with a weight $p_0(v)=\alpha$ slightly smaller than $\frac{\log d}{d}$. We process the vertices of $G$ in degeneracy order: when at time $i$ we process vertex $v_i$, with probability $p_{i-1}(v_i)$ we select $v_i$ into $I$. We then update the weights of its right neighbors: if $v_i$ is selected, we set their weights to $0$, and if not we multiply their weights by $(1-p_{i-1}(v_i))^{-1}$. This factor is chosen such that for each $\ell$ and $1<i<\ell$, we have the martingale property $\E[p_i(v_\ell)\mid p_{i-1}(\cdot)]=p_{i-1}(v_\ell)$.

If the algorithm worked as just written, by construction it would output an independent set (once a weight is set to zero, it can never become nonzero) and by the martingale property we would have $\Pr[v\in I]=\alpha$ for all $v$, as we want. However, there is a problem: if $p_{i}(v)$ exceeds $1$ for some $v$, the algorithm fails. In fact, for our analysis we want $p_i(v)$ to stay significantly less than $1$.

To avoid this problem, we set a threshold $\hat{p}$; if $v$ is a right neighbor of $v_i$ and updating $p_{i-1}(v)$ would lead to $p_i(v)>\hat{p}$, we set $p_i(v)=\hat{p}$. Of course, this breaks the martingale property. To avoid this, we use equalizing coin flips: if $v_i$ is chosen into $I$, the first sketch would say we set the weight of $v$ to $0$, but in fact we set it equal to either $0$ or $\hat{p}$ with a carefully chosen probability that recovers the martingale property.

In turn, this means we should not select into $I$ any vertex whose weight is $\hat{p}$, as this risks an edge appearing in $I$. We call such vertices \emph{bad}; we cease to update the weights of bad vertices, and do nothing when it comes to processing them.

For each $v_k$, by the martingale property, it follows that $\Pr[v_k\in I] = \alpha - \hat p\Pr[v_k\text{ bad}]$ (see \eqref{eq: exp size S(vk)} below), and so it suffices to argue that $\Pr[v_k\text{ bad}]$ is small.
See \cite[Ch. 13]{MolloyReed} for a more in-depth discussion of the necessity of thresholding such weights in the context of list coloring $K_3$-free graphs.
Let us now describe the algorithm more formally.

\begin{algorithm}[htb!]
    \caption{Random Independent Set Procedure}\label{algorithm: fcp}
    \SetKwInOut{Input}{Input}
    \SetKwInOut{Output}{Output}
    \SetKwInOut{Init}{Initialize}
    \DontPrintSemicolon
    \Input{An $n$-vertex $d$-degenerate $C_4$-free graph $G$ and a parameter $0<\eps < 1$.}
    \Output{An independent set $I\subset V(G)$.}
    \Init{Set $B_0=\emptyset$.
    For each $v \in V(G)$, set $p_0(v) = \alpha$, where $\alpha \coloneqq \dfrac{\log d}{(1 + \eps/2)d}$.\;
    Let $\hat p \coloneqq d^{-\eps/20}$ and fix a degeneracy ordering $(v_1, \ldots, v_n)$ of $V(G)$.}

    \ForEach{$i = 1, \ldots, n$}{
        \lIf{$v_i\in B_{i-1}$}{set $p_i(v_j)=p_{i-1}(v_j)$ for all $j$.}
        \Else{
            \lForEach{$j$ such that $v_j \notin N_R(v_i)$}{
                set $p_i(v_j) = p_{i-1}(v_j)$.
            }
            \lnl{step: ai} Let $a_{i} \sim \mathrm{BER}(p_{i-1}(v_i))$.\;
            \lIf{ $a_{i} = 1$}{add $v_i$ to $I$.}
            \ForEach{ $v_j \in N_R(v_i)\setminus B_{i-1}$}{
                \lnl{step: ai delete} \uIf{$a_{i} = 1$}{
                    set $p_i(v_j) = \hat p$ with probability $\mu_{i, j}\coloneqq \max\left\{0,\, \dfrac{p_{i-1}(v_j)/\hat p - 1 + p_{i-1}(v_i)}{p_{i-1}(v_i)}\right\}$,\;
                    and $p_i(v_j)=0$ otherwise.\;
                    }
                \lElse{set $p_i(v_j) = \min\left\{\dfrac{p_{i-1}(v_j)}{1 - p_{i-1}(v_i)},\, \hat p\right\}$.}
            }
        }
        \lnl{step: bad update} Set $B_i=\{v_j\,:\,p_i(v_j)=\hat{p}\}$.\;
    }
\end{algorithm}

Note that Algorithm~\ref{algorithm: fcp} can be implemented in $O(nd)$ time.
As a result of Steps~\ref{step: ai delete}~and~\ref{step: bad update}, the set $I$ is an independent set at every step of the algorithm, and in particular at the end.
The probability $\mu_{i, j}$ is well-defined: indeed, $\mu_{i, j} \leq 1$ since $p_{i-1}(v_j) \leq \hat p$. We have $\mu_{i, j} > 0$ if and only if $p_{i-1}(v_j)/(1 - p_{i-1}(v_i)) > \hat p$: this is the equalizing coin flip we described earlier.

We now justify the claim that $p_i(v_k)$ has the martingale property.
    \begin{lemma}\label{lemma: potential}
        For $i = 1, \ldots, k-1$, we have $\E[p_i(v_k) \mid p_{i-1}(\cdot)] = p_{i-1}(v_k)$.
    \end{lemma}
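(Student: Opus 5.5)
The plan is a direct case analysis on what happens at step $i$, conditioning on the full history up to time $i-1$ (so all $p_{i-1}(\cdot)$ and $B_{i-1}$ are fixed). Write $p\coloneqq p_{i-1}(v_i)$ and $x\coloneqq p_{i-1}(v_k)$. In each case we compute the conditional expectation of $p_i(v_k)$ over the coin $a_i$ and the equalizing coin, and check that it equals $x$.

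\textbf{Trivial cases.} If $v_i\in B_{i-1}$, or $v_k\notin N_R(v_i)$, or $v_k\in B_{i-1}$, then $p_i(v_k)=p_{i-1}(v_k)$ deterministically, so the identity holds. (For $v_k\in B_{i-1}$ the loop only ranges over $N_R(v_i)\setminus B_{i-1}$, so the weight of $v_k$ is untouched.) Hence we may assume $v_i\notin B_{i-1}$, $v_k\in N_R(v_i)\setminus B_{i-1}$, and so $x<\hat p$ and $p<\hat p<1$.

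\textbf{First main case: $x/(1-p)\le \hat p$.} Here $\mu_{i,k}=0$, so $p_i(v_k)=0$ when $a_i=1$ and $p_i(v_k)=x/(1-p)$ when $a_i=0$. The expectation is
\[
(1-p)\cdot \frac{x}{1-p}=x.
\]

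\textbf{Second main case: $x/(1-p)>\hat p$.} Now $\mu\coloneqq\mu_{i,k}=(x/\hat p-1+p)/p\in(0,1]$. When $a_i=0$ the weight is capped at $\hat p$; when $a_i=1$ it becomes $\hat p$ with probability $\mu$ and $0$ otherwise. The expectation is
\[
(1-p)\hat p + p\,\mu\,\hat p=\hat p\bigl(1-p+x/\hat p-1+p\bigr)=x.
\]

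The only point needing care is that $\mu_{i,k}$ is a genuine probability in this regime. Since $x<\hat p$ gives $\mu\le 1$, and the case hypothesis gives $\mu>0$, this has already been remarked after Algorithm~\ref{algorithm: fcp}. There is no real obstacle beyond this bookkeeping. Finally, for $i<k$ the vertex $v_k$ is never the vertex being processed, so no other branch of the algorithm affects $p_i(v_k)$ and the case analysis is exhaustive.
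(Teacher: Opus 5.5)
Your proof is correct and follows the paper's argument essentially verbatim: you use the same trivial cases, then the same two-case computation according to whether $p_{i-1}(v_k)/(1-p_{i-1}(v_i))\le\hat p$. You also treat the case $v_k\in B_{i-1}$ explicitly, noting that the loop over $N_R(v_i)\setminus B_{i-1}$ leaves $v_k$ untouched; the paper leaves this small piece of bookkeeping implicit.
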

    \begin{proof}
        If $v_i \notin N_L(v_k)$ or $v_i\in B_{i-1}$, the claim is trivial.
        
        Suppose $v_i \in N_L(v_k)$ and $v_i\not\in B_{i-1}$.
        We have two cases to consider.
        \begin{itemize}
            \item \textbf{Case 1:} $\frac{p_{i-1}(v_k)}{1 - p_{i-1}(v_i)} \leq \hat p$.
            We have
            \begin{align*}
                \E[p_i(v_k) \mid p_{i-1}(\cdot)] &= (1 - p_{i-1}(v_i))\frac{p_{i-1}(v_k)}{1 - p_{i-1}(v_i)}  = p_{i-1}(v_k)\,.
            \end{align*}
    
            \item \textbf{Case 2:} $\frac{p_{i-1}(v_k)}{1 - p_{i-1}(v_i)} > \hat p$.
            We have
            \begin{align*}
                \E[p_i(v_k) \mid p_{i-1}(\cdot)] = (1 - p_{i-1}(v_i))\hat p+ p_{i-1}(v_i)\,\hat p\,\mu_{i, k}
                = p_{i-1}(v_k)
            \end{align*}
            by the definition of $\mu_{i, k}$.\qedhere
        \end{itemize}
    \end{proof}

The key property of the algorithm is given by the following lemma.

\begin{lemma}\label{lemma: exp size S(vk)} For each $v\in V(G)$ we have
    $\Pr[v\in I] \geq (1 - \eps/40)\alpha$.
\end{lemma}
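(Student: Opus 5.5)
The identity to aim for is the one already announced before Algorithm~\ref{algorithm: fcp}. Since $v_k\in I$ exactly when $v_k\notin B_{k-1}$ and $a_k=1$, we have $\Pr[v_k\in I]=\E[p_{k-1}(v_k)\mathbf 1\{v_k\notin B_{k-1}\}]$. On the event $v_k\in B_{k-1}$ we have $p_{k-1}(v_k)=\hat p$. Iterating Lemma~\ref{lemma: potential} gives $\E[p_{k-1}(v_k)]=p_0(v_k)=\alpha$, and therefore
\[\Pr[v_k\in I]=\alpha-\hat p\,\Pr[v_k\in B_{k-1}].\]
So it suffices to show $\Pr[v_k\in B_{k-1}]\le \frac{\eps}{40}\frac{\alpha}{\hat p}$. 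The right-hand side is of order $\eps\,d^{-1+\eps/20}\log d$, so any bound like $d^{-2}$ is more than enough.

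The next step is to reduce badness of $v_k$ to a large deviation of a sum of weights. Let $u_1,\dots,u_m$ ($m\le d$) be the left neighbours of $v_k$ in processing order, and let $t_j$ be the index of $u_j$. Until $v_k$ becomes bad or is zeroed, $p(v_k)$ is multiplied by $(1-p_{t_j-1}(u_j))^{-1}$ at each non-bad, non-selected $u_j$. Bad $u_j$ leave $p(v_k)$ unchanged, and a selected $u_j$ can only push $p(v_k)$ to $\hat p$ via the coin flip $\mu$ when the multiplied value would already exceed $\hat p$. Using $\log\frac1{1-p}\le p+p^2$ and $p\le\hat p$, the event $v_k\in B_{k-1}$ forces
\[\sum_{j\le j_0}p_{t_j-1}(u_j)\,\mathbf 1\{u_j\notin B_{t_j-1}\}\ \ge\ \frac{\log(\hat p/\alpha)}{1+\hat p}\ \ge\ (1-\eps/19)\log d\]
for some $j_0$, for $d$ large. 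Equivalently it forces the sum $\sum_j p_{t_j-1}(u_j)$ to reach this level.

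Here $C_4$-freeness is what makes this sum concentrate. Define the process
\[S_i\coloneqq\sum_{j=1}^m p_{\min(i,\,t_j-1)}(u_j),\qquad S_0=m\alpha\le\frac{\log d}{1+\eps/2}.\]
By Lemma~\ref{lemma: potential}, each summand is a martingale until its freezing time, so $(S_i)$ is a martingale. At step $i$ the weights that change are those of the right neighbours of $v_i$. If $v_i$ had two right neighbours $u_j,u_{j'}$, then $v_iu_jv_ku_{j'}$ would be a $C_4$. Hence each step changes at most one summand, and by at most $\hat p$.

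Moreover, the conditional variance of the step-$i$ increment is at most $\hat p\cdot\E[|\Delta_i|\mid\mathcal F_{i-1}]$. We have $\E|\Delta_i|\le 2p_{i-1}(u_j)p_{i-1}(v_i)$, and the weight of any fixed $u_j$ accumulates $\sum_i p_{i-1}(v_i)=O(\log d)$ over its $\le d$ left neighbours before it is bad. So the predictable quadratic variation is $O(\hat p\,(\log d)^2)$ deterministically, after stopping the relevant processes when weights hit $\hat p$. Freedman's inequality then gives
\[\Pr\Big[\max_i S_i-S_0\ge \tfrac{\eps}{3}\log d\Big]\le\exp\!\Big(-\frac{c\,\eps^2(\log d)^2}{\hat p(\log d)^2+\hat p\,\eps\log d}\Big)=\exp(-c\eps^2d^{\eps/20})\ll d^{-2}.\]
Since $(1-\eps/19)-\frac{1}{1+\eps/2}\ge\eps/3$ for $\eps<1$, this bounds $\Pr[v_k\in B_{k-1}]$ as required and proves the lemma.

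The main obstacle is making the reduction in the second paragraph and the variance bound rigorous. The difficulty lies in the interaction between the frozen (bad) states and the equalizing coin flips $\mu$. First, one has to check that the last summand before $v_k$ becomes bad is not counted incorrectly. Second, the bound on the quadratic variation must be deterministic, or hold outside an event of probability $d^{-2}$. I would handle this with a stopping time at the first moment any relevant weight exceeds its typical range, and a crude bound $\sum_i p_{i-1}(v_i)\le d\hat p$ as a fallback.
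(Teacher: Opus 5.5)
The setup is correct. The identity $\Pr[v_k\in I]=\alpha-\hat p\Pr[v_k\in B_{k-1}]$ is the paper's starting point. Badness of $v_k$ does force $\sum_j p_{t_j-1}(u_j)\ge(1-\eps/19)\log d$. $C_4$-freeness does make each step move at most one summand of $S_i$. The gap is the Freedman step, which rests on a false claim: the predictable quadratic variation $W=\sum_i\E[\Delta_i^2\mid\mathcal F_{i-1}]$ is \emph{not} $O(\hat p(\log d)^2)$ deterministically.

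Your two facts only give, pathwise, $\sum_{v_i\in N_L(u_j)}p_{i-1}(u_j)p_{i-1}(v_i)\le(1+\hat p)M_j$, where $M_j=\max_i p_i(u_j)$; this follows by telescoping along the steps on which $u_j$ survives. Hence $W\le 4\hat p\sum_j M_j$. In the worst case this is only $O(d\hat p^2)=O(d^{1-\eps/10})$, and then Freedman says nothing about deviations of order $\eps\log d$.

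The bound $\sum_j M_j=O((\log d)^2)$ that you need holds in expectation, since Doob gives $\E M_j\lesssim\alpha\log d$. It does not hold surely. Because your target failure probability is $o(d^{-1+\eps/20})$, you would have to prove $\Pr[W>C\hat p(\log d)^2]=o(d^{-1+\eps/20})$. That is a concentration statement about the trajectories of all left neighbours, which can be correlated through common vertices further back, and it is no easier than the lemma itself.

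Your proposed stopping time does not supply this. Suppose each $u_j$ has $d$ private left neighbours that have no left neighbours of their own. Then $\Pr[M_j\ge x]\approx\alpha/x$ for every $x\le\alpha d^{1/(1+\eps/2)}$. So about $d\alpha/x\to\infty$ of the $u_j$ exceed any such ``typical'' level, and the stopping rule fires with probability close to $1$. The fallback $\sum_i p_{i-1}(v_i)\le d\hat p$ then returns you to a useless worst-case bound.

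The missing idea is that you need neither the running maximum nor any variance control. Badness only forces the \emph{terminal} value $S_{k-1}=\sum_j p_{t_j-1}(u_j)$ to be large. Your $C_4$ observation also gives more than bounded increments: for any constant $\lambda>0$, the product $Z_i=\prod_j\bigl(1+\lambda p_{\min(i,t_j-1)}(u_j)\bigr)$ is an exact martingale. At each step at most one factor moves, and that factor is a martingale by Lemma~\ref{lemma: potential}. Hence $\E[Z_{k-1}]=(1+\lambda\alpha)^m\le d^{\lambda/(1+\eps/2)}$. On the other hand $Z_{k-1}\ge\exp\bigl(\lambda(1-\lambda\hat p)S_{k-1}\bigr)$. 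Markov's inequality with $\lambda=10/\eps$ then gives $\Pr[v_k\in B_{k-1}]\le d^{-2}$ for large $d$.

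This is a close relative of the paper's proof. The paper bounds $\E[p_{k-1}(v_k)^2]$ through the supermartingale $p_i(v_k)^2\prod_{v_j\in N_L(v_k),\,j>i}\bigl(1+(1+(1+2\eta_{i,j,k})\eps')p_i(v_j)\bigr)$ and then uses $\hat p^2\Pr[v_k\in B_{k-1}]\le\E[p_{k-1}(v_k)^2]$. The $\eta$-correction is needed there because a step with $v_i\in N_L(v_k)\cap N_L(v_j)$ moves $p(v_k)$ and $p(v_j)$ at once. Your deterministic reduction to $S_{k-1}$ would avoid that bookkeeping.

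Separately, your inequality $(1-\eps/19)-\frac{1}{1+\eps/2}\ge\eps/3$ fails for $\eps$ near $1$; the left side is about $0.28$ at $\eps=1$. The bound $\eps/4$ holds on $(0,1)$ and suffices.
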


We defer the proof of this lemma, and explain how Theorem~\ref{theo: girth 5} follows.
We need the following form of the Chernoff bound, which can be found, for instance, in \cite[Ch.~4]{Mitzenmacher}. 

\begin{theorem}[Chernoff]\label{theo: chernoff}
    Let $X$ be a random variable that is the sum of mutually independent indicator variables, and let $\mu = \E[X]$. Then for any value $\delta > 0$,
    \[\Pr\left[X \leq (1 -\delta) \mu\right] \leq \exp \left( - \frac{ \delta^2 \mu }{2 }\right).\]
\end{theorem}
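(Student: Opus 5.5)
This is the standard multiplicative Chernoff lower-tail bound, which the paper cites from \cite[Ch.~4]{Mitzenmacher}. I would prove it with the exponential moment method. Write $X=\sum_{i} X_i$, where the $X_i$ are mutually independent indicators with $\Pr[X_i=1]=p_i$, so that $\mu=\sum_i p_i$.

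\textbf{Trivial range $\delta \ge 1$.} If $\delta>1$, then $(1-\delta)\mu<0\le X$ (assuming $\mu>0$), so the probability is $0$. If $\delta=1$, then
\[\Pr[X=0]=\prod_i(1-p_i)\le e^{-\mu}\le e^{-\mu/2}.\]
If $\mu=0$ the bound reads $\Pr[\cdot]\le 1$ and is trivial.

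\textbf{Main case $0<\delta<1$.}

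\emph{Step 1 (Markov).} For any $t>0$, apply Markov's inequality to the nonnegative variable $e^{-tX}$:
\[\Pr[X\le(1-\delta)\mu]=\Pr\bigl[e^{-tX}\ge e^{-t(1-\delta)\mu}\bigr]\le e^{t(1-\delta)\mu}\,\E[e^{-tX}].\]

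\emph{Step 2 (moment generating function).} By independence, $\E[e^{-tX}]=\prod_i\E[e^{-tX_i}]$. Each factor equals $1+p_i(e^{-t}-1)$. Using $1+x\le e^x$, this gives
\[\E[e^{-tX}]\le \exp\bigl(\mu(e^{-t}-1)\bigr).\]

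\emph{Step 3 (choice of $t$).} Take $t=-\log(1-\delta)>0$, so that $e^{-t}=1-\delta$. Then
\[\Pr[X\le(1-\delta)\mu]\le \exp\Bigl(-\mu\bigl[\delta+(1-\delta)\log(1-\delta)\bigr]\Bigr).\]

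\emph{Step 4 (analytic inequality).} It remains to show $\delta+(1-\delta)\log(1-\delta)\ge \delta^2/2$ on $(0,1)$. Expand $\log(1-\delta)=-\sum_{k\ge1}\delta^k/k$ and multiply by $(1-\delta)$; the coefficient of $\delta^k$ becomes $-\frac1k+\frac1{k-1}=\frac1{k(k-1)}$ for $k\ge2$. This yields the series
\[(1-\delta)\log(1-\delta)=-\delta+\sum_{k\ge2}\frac{\delta^k}{k(k-1)}.\]
Every term of the sum is nonnegative, and the $k=2$ term is exactly $\delta^2/2$, so the inequality follows.

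As a cross-check, one can instead argue by calculus: $h(\delta)=\delta+(1-\delta)\log(1-\delta)-\delta^2/2$ satisfies $h(0)=0$ and $h'(\delta)=-\log(1-\delta)-\delta\ge0$.

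The only mildly delicate point is Step 4, the coefficient bookkeeping in the series. Everything else is routine.
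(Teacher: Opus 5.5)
Your proof is correct. The paper gives no proof of its own and simply cites the bound from Mitzenmacher--Upfal, Ch.~4, and your argument is the standard one from that source: Markov's inequality on $e^{-tX}$, the bound $1+x\le e^x$ on each factor, the choice $t=-\log(1-\delta)$, and then $\delta+(1-\delta)\log(1-\delta)\ge\delta^2/2$. Your separate treatment of $\delta\ge1$ is a worthwhile addition, since the statement as written allows any $\delta>0$ while the textbook version only covers $0<\delta<1$.
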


\begin{proof}[Proof of Theorem~\ref{theo: girth 5}]
Given $0<\eps<1$, we set $d_0$ such that $d^{-\eps/100}\log d\le\eps/40$ for all $d\ge d_0$. Given $d\ge d_0$ and $n\in\N$, we set $q=\left\lceil400^2\cdot 20\eps^{-4}\frac{d\log n}{\log d}\right\rceil$. Now let $G$ be an $n$-vertex graph with vertex set $\{v_1,\dots,v_n\}$ in $d$-degeneracy order.

We independently run Algorithm~\ref{algorithm: fcp} $q$ times with input $G$ and $\eps$, generating independent sets $I_1,\dots,I_q$ in $G$. Fix a vertex $v\in V(G)$. 

Consider $X_v=\sum_{c\in[q]}Y_{c}$, where $Y_c$ is the indicator variable of the event $v\in I_c$. By Lemma~\ref{lemma: exp size S(vk)}, for each $c\in[q]$ we have
\[(1-\eps/20)\alpha \leq \frac{(1-\eps/20)\Pr[Y_c]}{(1 - \eps/40)} \leq (1 - \eps^2/400)\Pr[Y_c]\,.\]
Using this and Theorem~\ref{theo: chernoff}, we have
\begin{align*}
    \Pr\left[X_v \leq (1-\eps/20)q\alpha\right] &\leq \Pr\left[X_v \leq (1 - \eps^2/400)\E[X_v]\right] \\
    &\leq \exp \left( - \frac{ \eps^4 \E[X_v]}{400^2\cdot 2}\right)\le\exp\left(-\frac{\eps^4q\alpha}{400^2\cdot 6}\right)\,.
\end{align*}
Now by a union bound over $v\in V(G)$, we conclude
\[\Pr\left[\exists v\in V(G) \text{ s.t. }X_{v} \leq \frac{q\,\log d}{(1+\eps)\,d}\right] \leq \Pr\left[\exists v \text{ s.t.\ }X_v \leq (1-\eps/20)q\alpha\right] \leq n\exp\left(-\frac{\eps^4q\alpha}{400^2\cdot 6}\right).\]
The above is at most $n^{-1}$ by choice of $q$, completing the proof of Theorem~\ref{theo: girth 5}.
\end{proof}

We now prove Lemma~\ref{lemma: exp size S(vk)}. The idea is the following: for each $k$, we will find an upper bound on $\E[p_{k-1}(v_k)^2]$. Observing that $\E[p_{k-1}(v_k)^2]\ge\hat{p}^2\Pr[v\text{ bad}]$, this gives the upper bound on $\Pr[v\text{ bad}]$ we need.

In turn, our route to the upper bound on $\E[p_{k-1}(v_k)^2]$ is to write down an expression involving the weights of neighbors of $v_k$ which (1) we can evaluate at the start of the algorithm, (2) at time $k-1$ is equal to $p_{k-1}(v_k)^2$, and (3) is a supermartingale (i.e., its expected value at time $i$ is bounded above by its value at time $i-1$).

\begin{proof}[Proof of Lemma~\ref{lemma: exp size S(vk)}]
 Given $v\in V(G)$, suppose $v=v_k$. The equality we use is
\begin{align}\label{eq: exp size S(vk)}
    \Pr[v_k\in I] = \E\left[p_{k-1}(v_k)\mathbbm{1}(v_k\not\in B_{k-1})\right] = \E\left[p_{k-1}(v_k)\right] - \hat{p}\Pr[v_k\in B_{k-1}].
\end{align}
We first compute $\E[p_{k-1}(v_k)]=\alpha$. Applying the martingale property Lemma~\ref{lemma: potential} repeatedly, we have
\[\alpha=\E[p_0(v_k)]=\E\left[\E[p_1(v_k)\mid p_0(\cdot)]\right]=\E[p_1(v_k)]=\dots=\E[p_{k-1}(v_k)]\,.\]

We now bound $\E[p_{k-1}(v_k)^2]$ from above. To simplify equations in what follows, define $\eps'=\frac{\eps}{60}$. The supermartingale random variables we use to do this are
\[S_i\coloneqq p_{i}(v_k)^2\prod_{\substack{i < j \leq k-1 \\ v_j \in N_L(v_k)}}\left(1 + (1+(1+2\eta_{i, j, k})\eps')p_{i}(v_j)\right)\,,\]
where $\eta_{i, j, k} = |N_L(v_j) \cap N_L(v_k) \cap \{v_{i+1}, \ldots, v_{j-1}\}|$. (Note that $\eta_{i, j, k} \in \{0, 1\}$ by $C_4$-freeness.)

Observe that $S_{k-1}=p_{k-1}(v_k)^2$, since the product is empty. The critical claim is that this sequence of random variables is a supermartingale.

    \begin{claim*}
        For $i = 1, \ldots, k-1$, we have $\E[S_i \mid p_{i-1}(\cdot)] \leq S_{i-1}$.
    \end{claim*}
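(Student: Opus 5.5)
The plan is to verify $\E[S_i \mid p_{i-1}(\cdot)] \le S_{i-1}$ by a case analysis on how $v_i$ sits relative to $v_k$ and $N_L(v_k)$, using $C_4$-freeness to guarantee that at most one factor of the product is affected at each step.

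\textbf{Two observations used throughout.}
\begin{itemize}
\item The coefficients $c_{i,j}\coloneqq 1+(1+2\eta_{i,j,k})\eps'$ are non-increasing in $i$, since $\eta_{i,j,k}$ counts common left-neighbours in the shrinking window $\{v_{i+1},\dots,v_{j-1}\}$. Likewise each factor in the product is at least $1$. So dropping the factor indexed by $j=i$, or replacing $c_{i-1,j}$ by $c_{i,j}$, can only decrease $S$.
\item Since $G$ is $C_4$-free, $v_i$ is adjacent to at most one vertex of $N_L(v_k)$. Two such vertices $v_j,v_{j'}$ would give the $4$-cycle $v_iv_jv_kv_{j'}$. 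Hence at step $i$ at most one weight $p(v_j)$ with $v_j\in N_L(v_k)$, $j>i$, changes, besides possibly $p(v_k)$.
\end{itemize}

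\textbf{Easy cases.}
\begin{itemize}
\item If $v_i\in B_{i-1}$, or $v_i$ is adjacent to neither $v_k$ nor any later $v_j\in N_L(v_k)$, then no weight in $S$ changes. The claim follows from the first observation.
\item If $v_i\notin N_L(v_k)$ but $v_i\in N_L(v_j)$ for exactly one $v_j\in N_L(v_k)$ with $j>i$, then $p_i(v_k)=p_{i-1}(v_k)$. Here $S$ is affine in $p(v_j)$ with all other quantities fixed, so Lemma~\ref{lemma: potential} gives equality in expectation. The coefficient monotonicity then finishes this case.
\end{itemize}

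\textbf{Main case: $v_i\in N_L(v_k)$ and $v_i\notin B_{i-1}$.} Write $q=p_{i-1}(v_i)$ and $x=p_{i-1}(v_k)$. The factor $1+c_{i-1,i}\,q$, with $c_{i-1,i}\ge 1+\eps'$, disappears from $S$. I first show
\[
\E[p_i(v_k)^2\mid p_{i-1}(\cdot)]\le \frac{x^2}{1-q}.
\]
In the uncapped case the expectation is exactly $(1-q)\,x^2/(1-q)^2$. In the capped case it equals $\hat p\,\E[p_i(v_k)]=\hat p x$, which is less than $x^2/(1-q)$ because capping means $x/(1-q)>\hat p$. Since $q\le\hat p=d^{-\eps/20}$ is tiny for $d\ge d_0$, we have $\frac1{1-q}\le 1+(1+\eps')q$. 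This settles the subcase where $v_i$ has no right-neighbour in $N_L(v_k)$ after it.

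\textbf{Triangle subcase.} By the second observation, the remaining possibility is that $v_i$ is a common left-neighbour of $v_k$ and exactly one later $v_j\in N_L(v_k)$; this is allowed, since triangles are not excluded. Write $y=p_{i-1}(v_j)$. Then $\eta$ drops by one, so the coefficient of $v_j$ falls from $c_{i-1,j}=1+3\eps'$ (at least) to $1+\eps'$. The weights $p(v_k)$ and $p(v_j)$ are correlated, so I avoid computing the joint law. Instead I note the deterministic bound
\[
p_i(v_j)\le \frac{y}{1-q}.
\]
This holds because if $a_i=0$ the weight is at most $y/(1-q)$; if $a_i=1$ it is $0$ or $\hat p$, and $\hat p$ occurs only when $y/(1-q)>\hat p$. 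If $v_j$ is bad, its weight stays $y=\hat p$, which also satisfies the bound. Then
\[
\E\bigl[p_i(v_k)^2\bigl(1+(1+\eps')p_i(v_j)\bigr)\bigr]\le \Bigl(1+\frac{(1+\eps')y}{1-q}\Bigr)\frac{x^2}{1-q}.
\]
It then suffices to check the scalar inequality
\[
\Bigl(1+\frac{(1+\eps')y}{1-q}\Bigr)\frac{1}{1-q}\le \bigl(1+(1+3\eps')y\bigr)\bigl(1+(1+\eps')q\bigr)
\]
for $q,y\le\hat p$. Expanding, the left side is $1+q+(1+\eps')y+O(q^2+qy)$. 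The right side has the extra first-order terms $\eps'q+2\eps'y$, which dominate the quadratic errors once $\hat p\ll\eps'$, i.e. for $d\ge d_0$.

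I expect this triangle subcase to be the main obstacle. It is exactly where the correlated update forces the extra $2\eta\eps'$ slack built into $S_i$, and the reduction via the pointwise bound on $p_i(v_j)$ is the step I would try first.
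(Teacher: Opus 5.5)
Your proposal is correct and follows essentially the paper's proof. It uses the same $C_4$-freeness case split (at most one affected factor besides $p(v_k)$) and the same key bound $\E[p_i(v_k)^2\mid p_{i-1}(\cdot)]\le p_{i-1}(v_k)^2/(1-p_{i-1}(v_i))$; the paper obtains this from the pointwise inequality $p_i(v_k)\le p_{i-1}(v_k)/(1-p_{i-1}(v_i))$ together with Lemma~\ref{lemma: potential}, rather than your capped/uncapped computation. In the triangle subcase you likewise use the pointwise bound on $p_i(v_j)$ and let the drop of $\eta_{\cdot,j,k}$ from $1$ to $0$ absorb the extra factor $1/(1-p_{i-1}(v_i))$, with the only unstated case $v_k\in B_{i-1}$ being trivial since then $p_i(v_k)=p_{i-1}(v_k)$.
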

    \begin{claimproof}
        Given $i\in[k-1]$, we have
        \begin{equation}\E[S_i \mid p_{i-1}(\cdot)] =\E\left[p_{i}(v_k)^2\prod_{\substack{i< j \leq k-1 \\ v_j \in N_L(v_k)}}\left(1 + \left(1+(1+2\eta_{i, j, k})\eps'\right)p_{i}(v_j)\right) \,\,\bigg|\,\, p_{i-1}(\cdot)\right]. \label{eq: exp bound}
        \end{equation}
        by definition. This expression is easier to handle than it looks: when we process $v_i$, it turns out that at most three of the terms in the expression change, and we just need to show that that change is a supermartingale change. We now do this by considering cases based on the relationship between $v_i$ and the vertices in $X \cup \{v_k\}$, where $X \coloneqq N_L(v_k) \cap \{v_{i+1}, \ldots, v_{k}\}$.
        Note that as $G$ is $C_4$-free, we have $N_L(u) \cap N_L(v) = \emptyset$ for all pairs $u, v \in X$ and $|N_L(w) \cap N_L(v_k)| \leq 1$ for all $w \in X$.
        Two cases are easy.
        
        If $v_i\notin N_L(X\cup \{v_k\})$, or $v_i\in N_L(X)\cap B_{i-1}$, when we process $v_i$ we get $S_i=S_{i-1}$ since no terms change, and hence $\E[S_i\mid p_{i-1}(\cdot)]=S_{i-1}$ in this case.

        If $v_i\in N_L(X)\setminus (B_{i-1} \cup N_L(v_k))$, then by the $C_4$-freeness assumption, there is a unique $v_j\in X$ such that $v_iv_j\in E(G)$, and the only term in $S_i$ different from $S_{i-1}$ is the factor $1+(1+(1+2\eta_{i, j, k})\eps')p_{i}(v_j)$. By Lemma~\ref{lemma: potential} and since $\eta_{i-1, j, k} = \eta_{i, j, k}$, we have
        \[\E\left[1+(1+(1+2\eta_{i, j, k})\eps')p_{i}(v_j)\mid p_{i-1}(\cdot)\right]=1+(1+(1+2\eta_{i-1, j, k})\eps')p_{i-1}(v_j)\,,\]
        and it follows $\E[S_i\mid p_{i-1}(\cdot)]=S_{i-1}$ in this case.

        The remaining case is $v_i\in N_L(v_k)$, which needs a little more work. 
        We first consider the case that $v_i \notin N_L(X)$.
        There are two changes when we go from $S_{i-1}$ to $S_i$: we update the value of $p_{i-1}(v_k)$, and we remove the factor $1+(1+(1+2\eta_{i-1, i, k})\eps')p_{i-1}(v_i)$. It follows that what we need to prove is $\E[p_i(v_k)^2\mid p_{i-1}(\cdot)]\le p_{i-1}(v_k)^2(1+(1+(1+2\eta_{i-1, i, k})\eps')p_{i-1}(v_i))$ to verify the desired bound $\E[S_i\mid p_{i-1}(\cdot)]\le S_{i-1}$.
        In turn, observe that (deterministically) we have
        \[p_i(v_k)^2\le p_i(v_k)\frac{p_{i-1}(v_k)}{1-p_{i-1}(v_i)}\le p_i(v_k)p_{i-1}(v_k)\left(1+(1+(1+2\eta_{i-1, i, k})\eps')p_{i-1}(v_i)\right)\,,\]
        where the first inequality is strict if $v_i\in B_{i-1}$ or if its right-hand side exceeds $p_i(v_k)\hat{p}$, and otherwise is an equality. The second inequality holds since $0\le p_{i-1}(v_i)\le\hat{p}\le\eps/40$, where the last inequality is by choice of $\hat{p}$ and $d\ge d_0$. Taking expectations of this, we get
        \begin{align*}
            \E[p_i(v_k)^2\mid p_{i-1}(\cdot)] &\le \E\left[p_i(v_k)p_{i-1}(v_k)\left(1+(1+(1+2\eta_{i-1, i, k})\eps')p_{i-1}(v_i)\right)\mid p_{i-1}(\cdot)\right]\\
            &=p_{i-1}(v_k)^2\left(1+(1+(1+2\eta_{i-1, i, k})\eps')p_{i-1}(v_i)\right),
        \end{align*}
        as desired, where the equality is by Lemma~\ref{lemma: potential}.

        Now suppose that $v_i \in N_L(X)$ as well. 
        Then by the $C_4$-freeness assumption, there is a unique $v_j\in X$ such that $v_iv_j\in E(G)$.
        Furthermore, it must be the case that $\eta_{i, j, k} = 0$ as $N_L(v_k) \cap N_L(v_j) = \{v_i\}$.
        As above, it suffices to show that
        \begin{align*}
            &\E[p_i(v_k)^2(1+(1+(1+2\eta_{i, j, k})\eps')p_{i}(v_j))\mid p_{i-1}(\cdot)] \\
            &\qquad \qquad \qquad \leq p_{i-1}(v_k)^2(1+(1+(1+2\eta_{i-1, i, k})\eps')p_{i-1}(v_i))(1+(1+(1+2\eta_{i-1, j, k})\eps')p_{i-1}(v_j)).
        \end{align*}
        As before, we note that (deterministically) we have
        \begin{align*}
            &p_i(v_k)^2(1+(1+(1+2\eta_{i, j, k})\eps')p_{i}(v_j)) \\
            &\qquad \leq p_i(v_k)p_{i-1}(v_k)\left(1+(1+(1+2\eta_{i-1, i, k})\eps')p_{i-1}(v_i)\right)\left(1+(1+(1+2\eta_{i, j, k})\eps')\frac{p_{i-1}(v_j)}{1 - p_{i-1}(v_i)}\right) \\
            &\qquad \leq p_i(v_k)p_{i-1}(v_k)\left(1+(1+(1+2\eta_{i-1, i, k})\eps')p_{i-1}(v_i)\right)\left(1+(1+(1+2\eta_{i-1, j, k})\eps')p_{i-1}(v_j)\right),
        \end{align*}
        where we use the fact that $0\le p_{i-1}(v_i)\le\hat{p}\le\eps/40$ and that $\eta_{i-1, j, k} = 1 + \eta_{i, j, k} = 1$.
        The claim now follows by Lemma~\ref{lemma: potential}.

        This covers all the cases, completing the proof.
    \end{claimproof}

    Applying repeatedly the Claim, we have
    \begin{equation}\label{eq:variance}
     \begin{split}
        \E[p_{k-1}(v_k)^2]&=\E[S_{k-1}]=\E\left[\E[S_{k-1}\mid p_{k-2}(\cdot)]\right]\le\E[S_{k-2}]\le\dots\le\E[S_0]\\
        &=p_{0}(v_k)^2\prod_{\substack{1 \leq j \leq k-1 \\ v_j \in N_L(v_k)}}(1 + (1+(1+2\eta_{0, j, k})\eps')p_{0}(v_j)) \\
        &\leq \alpha^2\exp\left((1+3\eps')d\alpha\right),
     \end{split}
    \end{equation}
    where we use the fact that all weights are initially $\alpha$, $|N_L(v_k)| \leq d$, and $\eta_{0, j, k} \leq 1$ by $C_4$-freeness.
    
    We now bound the second term in~\eqref{eq: exp size S(vk)}, i.e., we show that $\hat{p}\Pr[v_k\in B_{k-1}]$ is small. As $\E[p_{k-1}(v_k)^2]\ge\hat{p}^2\Pr[v\in B_{k-1}]$, using~\eqref{eq:variance} we get
    \[\hat p\Pr[v_k\in B_{k-1}] \leq \frac{\E[p_{k-1}(v_k)^2]}{\hat p} \leq \frac{\alpha^2\exp\left((1+\eps/20)d\alpha\right)}{\hat p}.\]
    By definition of $\alpha$ and $\hat p$, we have
    \[\frac{\alpha\exp\left((1+\eps/20)d\alpha\right)}{\hat p} = \frac{\log d}{(1+\eps/2)d^{1 - \eps/20}}\exp\left(\left(\frac{1 + \eps/20}{1 + \eps/2}\right)\log d\right) \leq \frac{\log d}{d^{\eps/100}} \leq \eps/40,\]
    by choice of $d\ge d_0$.
Substituting our two calculated values into~\eqref{eq: exp size S(vk)}, we have
\[\Pr[v_k\in I] = \E[p_{k-1}(v_k)] - \hat p\Pr[v_k\in B_{k-1}] \geq \alpha-\eps\alpha/40= (1-\eps/40)\alpha,\]
completing the proof.
\end{proof}

\section{Degenerate graphs of high fractional chromatic number: proof of Theorem~\ref{theo: rg}}\label{section: rg}

In this section, we will prove Theorem~\ref{theo: rg}, which exhibits a $d$-degenerate graph having high girth and high fractional chromatic number.
For the reader's convenience, we restate the result below.

\begin{theorem*}[{Restatement of Theorem\ref{theo: rg}}]
For any $g\ge4$ and $0<\delta< 1$, suppose that $n$ and $d$ satisfy $n\ge d\ge 2$ and
\begin{equation}\frac{1}{g-1}\binom{2g-4}{g-2}d^{g-1}\le\delta\frac{n}{H_n^2}\,.\end{equation}
Then there exists a $d$-degenerate graph $G$ with at most $n$ vertices and girth at least $g$ such that 
\[\chi_f(G)\geq (1-\delta)\frac{d}{W(d)+1}\,.\]
\end{theorem*}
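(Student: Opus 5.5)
The plan is to take $G$ to be a cleaned-up uniform attachment graph and certify the lower bound through weak duality, $\chi_f(G)\ge\omega_f(G)$, using an explicit deterministic fractional clique $f$.

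\textbf{Construction.} Take vertex set $[n]$. Each vertex $i>d$ independently chooses a uniformly random $d$-subset of $[i-1]$ as its left-neighbourhood; vertices $i\le d$ are joined to all of $[i-1]$. The resulting graph $G_0$ is $d$-degenerate in the natural order.

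\textbf{Removing short cycles.} Next I count cycles of length $\ell<g$. A cycle on vertices $x_1,\dots,x_\ell$ is determined by which vertices are local maxima in the order, and each such vertex must choose its two cycle neighbours. An edge $xy$ with $x>y$ appears with probability at most $d/(x-1)$. Summing $\prod d/x$ over the admissible orderings and configurations gives a bound of order $d^\ell$ times a Catalan-type count times powers of harmonic sums. This is where I expect $\frac1{g-1}\binom{2g-4}{g-2}$ and the $H_n$ factors in \eqref{eq:rg} to come from. The resulting bound is that the expected number of vertices lying on cycles of length less than $g$ is at most $\delta n/H_n^2$, up to constants I will tune. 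Deleting one vertex from each such cycle leaves a graph $G$ of girth at least $g$ with at most $n$ vertices that is still $d$-degenerate, and fewer than $\delta n/H_n^2$ vertices are deleted with positive probability.

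\textbf{The fractional clique.} Since independent sets of $G_0$ may be larger than $n(W(d)+1)/d$, a uniform $f$ cannot work. Instead I would use scale-invariant weights
\[
f_i=(1-\delta')\frac{d}{(W(d)+1)\,i\,H_n},
\]
so that $\sum_i f_i=(1-\delta')\frac{d}{W(d)+1}$. The deleted vertices carry total weight $O(\delta/H_n)$ times that, which is negligible. It then remains to show that, with probability bounded away from $0$, every independent set $I$ of $G_0$ satisfies
\[
\sum_{i\in I}\frac1i\le(1+\delta'')\frac{(W(d)+1)H_n}{d}.
\]

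\textbf{Main obstacle: a first-moment bound over independent sets.} I would partition $[n]$ into geometric blocks $[\lambda^t,\lambda^{t+1})$ and encode $I$ by its densities $\rho_t$ in each block. On this scale the harmonic weight of $I$ is $\approx\log\lambda\sum_t\rho_t$. For a fixed $I$, the probability that it is independent is $\prod_{i\in I}\bigl(1-|I\cap[i-1]|/(i-1)\bigr)^d\approx\exp\bigl(-d\sum_{i\in I}\bar\rho(i)\bigr)$, where $\bar\rho(i)$ is the density of $I$ in $[i-1]$. The number of choices of $I$ is $\exp\bigl(\sum_t|B_t|\,h(\rho_t)\bigr)$, where $B_t$ is the $t$-th block and $h$ is the entropy function.

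The key point is that $\bar\rho$ is an average over all earlier scales, weighted towards the most recent ones. The first-moment exponent therefore becomes a variational problem in a continuous profile $\rho(s)$, $s\in[0,\log n]$: entropy $\int\rho\log(e/\rho)$ against penalty $d\int\rho\,\bar\rho$. I must show that this exponent is negative unless $\int\rho\,ds\le(1+o(1))(W(d)+1)/d\cdot\log n$. I expect the extremal profile to be essentially constant, with $\rho$ solving $\log(1/\rho)=d\rho-1$, i.e.\ $d\rho e^{d\rho}=de$. This gives $d\rho=W(de)$, and I still need to check how this matches the $W(d)+1$ in the statement. For this step I would first verify the constant-profile computation. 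I would then prove that any profile beating this bound pays exponentially in $n$, using a convexity/Gronwall-type argument on $\bar\rho$. Finally I would union bound over the $n^{O(\log n/\log\lambda)}$ discretised profiles.
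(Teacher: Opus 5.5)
The gap is your choice of fractional clique, $f_i\propto 1/(iH_n)$. These weights are scale-invariant, so the initial segment $[m]$ carries about a $\log m/H_n$ share of the total. Hypothesis \eqref{eq:rg} allows $n=\Theta_{g,\delta}(d^{g-1}\log^2 d)$. In that regime $H_n=\Theta_g(\log d)$, so a constant share of the weight sits on initial segments of length $\mathrm{poly}(d)$. This breaks your argument in three places.

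First, $f_1=(1-\delta')\frac{d}{(W(d)+1)H_n}=\Theta_g(d/\log^2 d)>1$ for large $d$, so the singleton $\{1\}$ already violates the fractional clique constraint. Equivalently, the inequality $\sum_{i\in I}1/i\le(1+\delta'')(W(d)+1)H_n/d$ you aim to prove fails deterministically for $I=\{1\}$. Feasibility of $f_1\le 1$ alone needs $H_n\ge(1-\delta')d/(W(d)+1)$, far outside the theorem's regime.

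Second, short cycles live on low-index vertices. A vertex $v>d$ is the top vertex of order $d^3/v$ triangles in expectation. So for $g=4$ and $n=\Theta(d^3\log^2 d)$, the cleaning step deletes most of $[d^3/\log d]$, which carries a $1-o(1)$ share of the $1/i$-weight. Your claim that the deleted vertices carry an $O(\delta/H_n)$ share conflates their number with their weight.

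Third, your continuous functional drops the factor $e^s$ (the number of vertices at scale $s$) from both entropy and penalty, although your discrete version has it through $|B_t|$. With it, the first-moment exponent is governed by the top $O(1)$ scales, while harmonic weight counts all scales equally. Consider a profile equal to a large constant multiple of $W(ed)/d$ up to scale $\log n-L$, and slightly below $W(ed)/d$ on the last $L=O(1)$ scales. It has positive exponent but harmonic weight several times your target. So the step ``any profile beating this bound pays exponentially in $n$'' is false, and no single union bound over profiles can work.

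What is missing is a weighting concentrated at the end of the order. The paper uses $f_i\propto\ell_n(i)=H_n-H_{i-1}=\sum_{j=i}^n 1/j$. These weights sum to $n$ and give $[m]$ only about a $\frac mn(1+\log\frac nm)$ share, so both the unconcentrated initial segment and the cleaning step are cheap. The key estimate is the deterministic inequality $\sum_{j\in S}|S\cap[j]|/j\ge(\ell_n(S)^2+|S|^2)/(2n)$, proved by induction on $n$. It gives $\mathbb{P}(S\text{ independent})\le\exp\bigl(dH_n-\frac{d}{2n}(\ell_n(S)^2+|S|^2)\bigr)$.

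Because $|S|$ and $\ell_n(S)$ enter separately, a plain union bound suffices, using $\binom ns\le e^{nh(s/n)}$ and $\sup_x\bigl[x\log(1/x)+x-\frac d2x^2\bigr]=\frac{(W(d)+1)^2-1}{2d}$. It shows that with probability at least $1-\varepsilon$ every independent set has $\ell_n(I)<(1+\varepsilon)n(W(d)+1)/d$, with no profiles or multiscale analysis.

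The cleaning costs $\mathbb{E}[\ell_n(R)]\le\ell_n(1)\,\mathbb{E}[\#\{\text{cycles of length}<g\}]=O_g(d^{g-1}H_n^2)$. This uses the bound $\frac1{2r}\binom{2r-2}{r-1}d^rH_n$ on the expected number of $r$-cycles, and this product is where the $H_n^2$ in \eqref{eq:rg} comes from. Incidentally, $W(ed)<W(d)+1$, so the constant you found would not have been an obstacle.
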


The construction of the graph $G$ in the proof of Theorem~\ref{theo: rg} is based on the \emph{uniform attachment model} studied in, e.g.,~\cite{FriezePerezGimenezPralatReiniger19,MalyshkinZhukovskii22,AcanPittel20}. We will consider a slight variant of this model which is more convenient for certain calculations. 

\begin{defn}
\label{def: uniform attachment}
Let $U_{n,d}$ be the random graph defined as follows. Start with the vertex set $\{0,\dots,n\}$ and, for each $1\leq i\leq n$, let $N_i^-$ be a uniformly random subset of $\{0,\dots,i-1\}$ of cardinality $\min\{i,d\}$ chosen independently of all previous random choices and add an edge from $i$ to every vertex of $N_i^-$. Finally, delete vertex $0$ so that the vertex set of $U_{n,d}$ is precisely $[n]\coloneqq \{1,\dots,n\}$. 
\end{defn}

\begin{remark}
It is clear that $U_{n,d}$ is $d$-degenerate with probability $1$. Indeed, $1,2,\dots,n$ is a valid degeneracy ordering. 
\end{remark}

As discussed in Section~\ref{subsection: proof overview}, we will take advantage of the weak duality theorem for linear programming to prove the desired lower bound.
In particular, recalling the linear programming formulation \eqref{eq:chif} for fractional coloring, it suffices to exhibit an appropriate feasible point to the dual linear program \eqref{eq:fracclique}.

Recall from the introduction that $H_n\coloneqq \sum_{j=1}^n\frac{1}{j}$ for $n\geq1$. For convenience, define $H_0\coloneqq 0$. Our construction of a fractional clique is based on the \emph{harmonic tail function} $\ell_n:[n]\to \mathbb{R}$ defined by $\ell_n(i)=H_n-H_{i-1}$ or, in other words, $\ell_n(i)=\sum_{j=i}^n\frac{1}{j}$. Given a set $S\subseteq [n]$, define $\ell_n(S)\coloneqq \sum_{i\in S}\ell_n(i)$. We observe that
\begin{equation}
\label{eq: sum ell}
\ell_n([n])=\sum_{i=1}^n\ell_n(i)=\sum_{i=1}^n\sum_{j=i}^n\frac{1}{j}=n\end{equation}
as, for each $1\leq j\leq n$, the term $\frac{1}{j}$ appears $j$ times in the summation. We now state two lemmas and derive Theorem~\ref{theo: rg} from them; we will then devote the rest of the section to proving the lemmas.

\begin{lemma}
\label{lem: fractional clique}
Let $0< \varepsilon\le\frac14$. If $n\geq d\geq 2$ and
\begin{equation}
\label{eq: fractional clique}
\frac{d^3H_n^2}{n} \le2\varepsilon,
\end{equation}
then the probability that $U_{n,d}$ has an independent set $I$ such that $\ell_n(I)\geq  \left(1+\varepsilon\right)\frac{n(W(d)+1)}{d}$ is at most $\varepsilon$.
\end{lemma}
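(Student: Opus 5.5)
The plan is a first-moment (union-bound) argument over candidate independent sets. The cost of each set is measured through the profile of its partial sums rather than its size. For $I\subseteq[n]$ write $s_I(j)\coloneqq|I\cap[j]|$. Exchanging the order of summation as in \eqref{eq: sum ell} gives
\[
\ell_n(I)=\sum_{j=1}^n\frac{s_I(j)}{j}.
\]
So $\ell_n(I)$ is the sum of the running densities of $I$. Independence of $I$ in $U_{n,d}$ means that for every $i\in I$ the set $N_i^-$ avoids $I\cap[i-1]$. Because the $N_i^-$ are chosen independently, for a \emph{fixed} set $I$ we get
\[
\Pr[I\text{ independent}]=\prod_{i\in I}\frac{\binom{i-s_I(i-1)}{\min\{i,d\}}}{\binom{i}{\min\{i,d\}}}\le \prod_{i\in I}\exp\Bigl(-d\,\frac{s_I(i-1)}{i}\Bigr),
\]
at least for $i\ge d$. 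The vertices $i<d$ will be treated separately, or simply discarded, since their total contribution to $\ell_n$ is $O(dH_n)$, which is negligible.

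The key step is to bound
\[
\E\Bigl[\sum_{I\text{ indep}}e^{\lambda(\ell_n(I)-t)}\Bigr],\qquad t=(1+\varepsilon)\frac{n(W(d)+1)}{d},
\]
for a well-chosen $\lambda>0$. This quantity upper bounds the probability in the statement. Using the product bound above, it is at most $e^{-\lambda t}$ times a sum over all $I$ of $\prod_{i\in I}\exp\bigl(\lambda\ell_n(i)-d\,s_I(i-1)/i\bigr)$.

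I would evaluate this sum as a transfer-matrix (dynamic programming) recursion over $i=1,\dots,n$, with state $s=s_I(i-1)$. Going from $i$ to $i+1$, the state either stays the same (factor $1$) or increases by one (factor $e^{\lambda\ell_n(i)-ds/i}$). Rescaling $i=xn$ and $s=\rho n/d$, the logarithm of the sum is governed by a variational problem. One maximizes over nondecreasing profiles $\rho(x)$ the functional
\[
\lambda\int\ell\,d\rho \;+\;\text{entropy of choosing the jumps} \;-\;\int\frac{d\rho(x)\,\rho(x)}{x}\,dx.
\]
The claim to verify is that, after optimizing $\lambda$ against $t$, this maximum is negative exactly when $t>(1+\varepsilon)n(W(d)+1)/d$. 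In the optimal profile the local density at $x$ should be constant in $\rho/x$, and solving the stationarity condition $\log(1/\text{density})=d\cdot\text{density}+O(1)$ is where $W(d)$ appears: the density is $W(d)/d$, and the extra $+1$ comes from $\int_0^1\ell=\int(-\log x)\,dx=1$ type integrals. Alternatively, and more robustly, I would guess a potential of the form $\Phi_i(s)=a_i s+b_i$ that is a supersolution of the recursion. Checking the one-step inequality is then purely local, and this avoids passing to a continuum limit.

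The main obstacle is making this discrete calculation rigorous with only polynomial losses, since the conclusion is a probability at most $\varepsilon$ rather than an exponentially small one. The errors come from three sources: replacing $(1-s/i)^d$ by $e^{-ds/i}$, which costs a factor like $e^{O(d^2s^2/i^2)}$ per step; the granularity of $\ell_n(i)\le H_n$ in the exponent $\lambda\ell_n(i)$ with $\lambda\approx d$; and the early vertices $i\le d$. I expect the accumulated error in the exponent to be of order $d^3H_n^2/n$. The hypothesis \eqref{eq: fractional clique} bounds this by $2\varepsilon$, and with the slack $\varepsilon n(W(d)+1)/d$ in $t$ and Markov's inequality this should give total failure probability at most $\varepsilon$. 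If the supersolution potential is hard to find exactly, I would first try $a_i$ linear in $\ell_n(i)$ and tune the constant so that the $W(d)$ equation closes.
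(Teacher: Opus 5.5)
Your first-moment skeleton, the identity $\ell_n(I)=\sum_j s_I(j)/j$, and the product bound for a fixed set are all fine. The gap is that the step carrying the proof is never established. You state that the tilted first moment is small (``the claim to verify''), support it only by a continuum heuristic that assumes the optimal profile has constant density, and propose as the rigorous route a potential affine in $s$. That route cannot work.

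Let $Z_i(s)$ be your transfer-matrix sum from step $i$ in state $s$, so $Z_i(s)=Z_{i+1}(s)+e^{\lambda\ell_n(i)-ds/i}Z_{i+1}(s+1)$ with $Z_{n+1}\equiv1$. Suppose $Z_i\le e^{\Phi_i}$ is certified by the one-step inequality together with $\Phi_{n+1}\ge0$.
\begin{itemize}
\item Dropping the second term repeatedly gives $\Phi_{i+1}(i)\ge\Phi_{n+1}(i)\ge0$. So if $\Phi_{i+1}$ is affine in $s$, then $\Phi_{i+1}(1)\ge(1-\frac{1}{i})\Phi_{i+1}(0)$.
\item Success requires $\Phi_1(0)<\lambda t$. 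Given this, the inequality at $s=0$ yields $\Phi_i(0)-\Phi_{i+1}(0)\ge\bigl(\lambda\ell_n(i)-\Phi_{i+1}(0)/i\bigr)_+\ge\lambda(\ell_n(i)-t/i)_+$.
\item Summing over $i$ gives $\Phi_1(0)\ge\lambda\sum_i(\ell_n(i)-t/i)_+=(1-o(1))\lambda n$. But $\lambda t=O(\lambda n\log d/d)$, so this contradicts $\Phi_1(0)<\lambda t$ for every $\lambda>0$ once $d$ is large.
\end{itemize}
The underlying obstruction is that $\log Z_i(s)$ is a log-sum of exponentials of linear functions of $s$, hence convex, and it is $\ge0$ at $s=i-1$. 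An affine majorant is therefore forced to essentially ignore the state.

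Your error accounting is also misdirected. The step $\binom{i-s}{d}/\binom{i}{d}\le(1-s/i)^d\le e^{-ds/i}$ costs nothing. The natural loss, which the paper incurs by replacing $|S\cap[j-1]|$ with $|S\cap[j]|$, is a factor $e^{dH_n}$, far larger than $e^{O(d^3H_n^2/n)}$. It is affordable because the first moment has an exponential margin, not because the losses are polynomial.

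The missing idea is a deterministic inequality that makes the interaction term depend only on $|S|$ and $\ell_n(S)$: for every $S\subseteq[n]$,
\[
\sum_{j\in S}\frac{|S\cap[j]|}{j}\ \ge\ \frac{\ell_n(S)^2+|S|^2}{2n}.
\]
It is proved by induction on $n$, and each case reduces to a sum of squares. Its continuum form is integration by parts plus Cauchy--Schwarz. With $\sigma(x)=|S\cap[xn]|/n$,
\[
\int_0^1\frac{\sigma\sigma'}{x}\,dx=\tfrac12\sigma(1)^2+\tfrac12\int_0^1\frac{\sigma^2}{x^2}\,dx\ge\tfrac12\sigma(1)^2+\tfrac12\Bigl(\int_0^1\frac{\sigma}{x}\,dx\Bigr)^2,
\]
with equality exactly for constant density. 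This is the rigorous version of your heuristic.

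Combined with your product bound and $|S\cap[j-1]|=|S\cap[j]|-1$ for $j\in S$, it gives $\Pr[S\text{ independent}]\le\exp\bigl(dH_n-\frac{d}{2n}(\ell_n(S)^2+|S|^2)\bigr)$. No tilting or dynamic programming is then needed. A plain union bound over $s=|S|$, using $\binom{n}{s}\le e^{nh(s/n)}$ and $h(x)\le x\log(1/x)+x$, reduces everything to
\[
\sup_{x\in[0,1]}\Bigl(x\log(1/x)+x-\frac{d}{2}x^2\Bigr)=\frac{(W(d)+1)^2-1}{2d},
\]
attained at $x=W(d)/d$. The ``$-1$'' leaves a margin $e^{-n/(2d)}$ in the first moment. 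The hypothesis $d^3H_n^2/n\le2\varepsilon$ is used only to check that $(n+1)e^{dH_n-n/(2d)}\le\varepsilon$.
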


\begin{lemma}
\label{lem: count cycles}
For any $r\geq3$ and positive integers $d$ and $n$, the expected number of cycles of length $r$ in $U_{n,d}$ is at most
\[\frac{1}{2r}\binom{2r-2}{r-1}d^rH_n.\]
\end{lemma}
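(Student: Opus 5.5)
The plan is a first-moment calculation. Each possible cycle is a set of edges. Its probability is bounded by a product of per-edge factors. The sum over vertex labels is then done by ordering the vertices. Only an inequality is needed, so the argument can afford to be generous.

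\emph{Step 1 (probability of a fixed edge set).} Orient every edge $ij$ of $U_{n,d}$ with $i>j$ from $i$ to $j$; such an edge is present exactly when $j\in N_i^-$. For a fixed vertex $i$ and fixed distinct $j_1,\dots,j_m<i$, we have
\[\Pr[j_1,\dots,j_m\in N_i^-]=\prod_{t=0}^{m-1}\frac{\min\{i,d\}-t}{i-t}\le \min\{1,d/i\}^m\le (d/i)^m .\]
The sets $N_i^-$ are independent over $i$. Hence for a fixed cycle $C$ on vertex set $\{x_1,\dots,x_r\}\subseteq[n]$,
\[\Pr[C\subseteq U_{n,d}]\le d^r\prod_{v\in V(C)} v^{-m(v)},\]
where $m(v)\in\{0,1,2\}$ is the number of edges of $C$ whose larger endpoint is $v$. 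Note that $\sum_v m(v)=r$.

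\emph{Step 2 (summing over labels via the relative order).} Write each $r$-cycle as one of its $2r$ cyclic sequences $(x_1,\dots,x_r)$ of distinct elements of $[n]$; this accounts for the prefactor $\frac1{2r}$. Group the sequences by the relative order pattern, i.e.\ by the permutation that sorts them. For a fixed pattern, let $y_1<\dots<y_r$ be the sorted values and $m_s$ the exponent carried by $y_s$.

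The key structural fact: the edges with both ends among $y_1,\dots,y_s$ form a proper subgraph of a cycle, hence a linear forest. So $M_s:=m_1+\dots+m_s\le s-1$ for $s<r$, and $c_s:=s-M_s\ge1$ is exactly the number of path components after $s$ vertices.

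Now sum iteratively from the smallest vertex upward, using $\sum_{y<Y}y^{a}\le Y^{a+1}/(a+1)$ for $a\ge0$. After summing out $y_1,\dots,y_{s}$, the accumulated power of $y_{s+1}$ is $y_{s+1}^{c_s-m_{s+1}}$. The exponent stays nonnegative because $c_{s+1}\ge1$, and each step contributes a constant factor $1/c_s$. At the top we are left with $\sum_{y_r\le n} y_r^{(r-1)-r}=H_n$. So each pattern contributes at most $H_n\prod_{s=1}^{r-1}c_s^{-1}$.

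\emph{Step 3 (the combinatorial sum, main obstacle).} It remains to show that the total over all patterns satisfies
\[\sum_{\text{patterns}}\ \prod_{s=1}^{r-1}\frac1{c_s}\le\binom{2r-2}{r-1}.\]
I would encode a pattern as the process of inserting vertices in increasing order into a linear forest. A vertex with $m=0$ opens a new component. A vertex with $m=1$ extends one of the $c$ current components at one of its two endpoints, which gives $2c$ choices. A vertex with $m=2$ joins two components, or closes the final path into the cycle at the last step. The weight $1/c$ cancels the factor $c$ in the choice counts, so each step contributes a bounded number of weighted choices, and the sequence $(c_s)$ becomes a lattice path from $1$ back to $0$ with $r$ steps.

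The claim should then reduce to counting such paths with multiplicities that are at most $2$ per step. The aim is a central binomial count $\binom{2r-2}{r-1}$, via a ballot or reflection argument. Alternatively, I would bound the weighted count by a generating function, checking that the join step weight $\binom{c}{2}\cdot 4/c$ is absorbed correctly.

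This bookkeeping, and getting exactly the constant $\binom{2r-2}{r-1}$ rather than a cruder $4^r$-type bound, is where I expect the difficulty. If the exact constant resists, I would first verify small $r$ by hand to calibrate the correct weights, and adjust the inequality in Step 2 (for example using $\sum_{y<Y}y^a\le Y^{a+1}/(a+1)$ only where it is needed) so that the weights match the binomial count.
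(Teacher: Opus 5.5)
Your Steps 1 and 2 are sound. The per-pattern bound $H_n\prod_{s=1}^{r-1}c_s^{-1}$ is correct: the exponent of $y_{s+1}$ is $c_s-m_{s+1}=c_{s+1}-1\ge 0$ for $s+1<r$, and at the top $m_r=2$, $c_{r-1}=1$ leave exactly $y_r^{-1}$.

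The gap is Step 3. It carries all of the quantitative content of the lemma, and you leave it unproved. Moreover, the inequality you need has no slack at all:
\begin{itemize}
\item for $r=3$ each of the $6$ sequences has weight $1$, giving $6=\binom42$;
\item for $r=4$ the three $4$-cycles on $y_1<y_2<y_3<y_4$ have weights $1,1,\tfrac12$, giving $8\cdot\tfrac52=20=\binom63$;
\item in fact equality holds for every $r$.
\end{itemize}
So any upper-bounding scheme that is strict on even one pattern overshoots $\binom{2r-2}{r-1}$, and the bookkeeping you sketch is strict. An $m=1$ step has $2u+t$ options when the forest has $u$ nontrivial paths and $t$ isolated vertices, not $2c$. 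A join has order $c^2$ options against a single factor of roughly $1/c$, so the weighted choices per step are not bounded. For instance, in the cycle $y_1y_3y_2y_4y_1$ the vertex $y_3$ joins the two isolated vertices $y_1,y_2$ in exactly one way, not $4\binom22$. A ballot or reflection count with multiplicities of this kind cannot land on the exact constant.

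To close the gap you must evaluate the sum exactly.
\begin{enumerate}
\item Delete $y_r$; its two neighbours are the ends of a Hamiltonian path on the other vertices. This shows that your sum over the $r!$ patterns equals $r\sum_{\sigma\in S_{r-1}}\prod_{s=1}^{r-1}c_s(\sigma)^{-1}$, where $c_s(\sigma)$ is the number of maximal runs of positions of $\sigma$ carrying values at most $s$. So you need $\sum_{\sigma\in S_m}\prod_{s=1}^{m}c_s(\sigma)^{-1}=\frac{1}{m+1}\binom{2m}{m}$ with $m=r-1$.
\item Redo your Step 2 computation with integrals instead of sums; it is then exact. That is, $\prod_s c_s(\sigma)^{-1}$ equals the integral of $\prod_{i=1}^{m-1}\max\{x_i,x_{i+1}\}^{-1}$ over the region of $[0,1]^m$ where $x$ has relative order $\sigma$.
\item Summing over $\sigma$ gives this integral $q_m$ over all of $[0,1]^m$. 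Splitting at the position of the largest coordinate and rescaling gives $q_m=\sum_{k=1}^{m}q_{k-1}q_{m-k}$ with $q_0=1$, which is the Catalan recurrence.
\end{enumerate}

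This is exactly the paper's argument, except that the paper bypasses patterns entirely. It drops distinctness and writes the count as $\frac{d^r}{2r}\mathrm{Tr}(A^r)$ with $A_{ij}=1/\max\{i,j\}$. It then proves $P_s(m)\le\frac{1}{s+1}\binom{2s}{s}m$ for the discrete path sums by induction, splitting at the position of the maximum. No identity over permutations is ever needed.
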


\begin{proof}[Proof of Theorem~\ref{theo: rg}]
Given $0<\delta<1$, set $\varepsilon:=\frac\delta4$. Consider the graph $U_{n,d}$ from Definition~\ref{def: uniform attachment}.  First, let us show that $n,d$ and $\varepsilon$ satisfy the hypothesis \eqref{eq: fractional clique} of Lemma~\ref{lem: fractional clique}. Since $g\geq4$, we have $\frac{1}{g-1}\binom{2g-4}{g-2}\geq2$ and $d^{g-1}\geq d^3$. Therefore, \eqref{eq:rg} implies
\[\frac{d^3H_n^2}{n}\leq \frac{\frac{1}{g-1}\binom{2g-4}{g-2}d^{g-1}H_n^2}{2n}\leq \frac{\delta}{2}=2\varepsilon.\]
Thus, by Lemma~\ref{lem: fractional clique}, with probability at least $1-\varepsilon$, every independent set $I$ in $U_{n,d}$ satisfies $\ell_n(I)\leq  \left(1+\varepsilon\right)\frac{n(W(d)+1)}{d}$. 

Next, define $f:[n]\to[0,\infty)$ by 
\[f(i)\coloneqq \frac{\ell_n(i)d}{(1+\varepsilon)n(W(d)+1)}.\]
Using the result from the previous paragraph, we get that, with probability at least $1-\varepsilon$, every independent set $I$ in $U_{n,d}$ satisfies 
\[\sum_{i\in I}f(i)=\sum_{i\in I}\frac{\ell_n(i)d}{(1+\varepsilon)n(W(d)+1)}=\frac{\ell_n(I)d}{(1+\varepsilon)n(W(d)+1)}\leq 1.\]
Thus, with probability at least $1-\varepsilon$, $f$ is a fractional clique in $U_{n,d}$. 

Let $R\subseteq [n]$ be obtained by selecting the maximal vertex under the natural ordering of $[n]$ from every cycle of length less than $g$ in $U_{n,d}$ and define $G=U_{n,d}-R$. Clearly, $G$ is $d$-degenerate and has girth at least $g$ by construction. If $f$ is a fractional clique in $U_{n,d}$, then its restriction to $V(G)$ is also a fractional clique in $G$. By Lemma~\ref{lem: count cycles} and the fact that $\ell_n(i)\leq \ell_n(1)=H_n$ for all $i\in [n]$, we get
\[\mathbb{E}[\ell_n(R)]\leq H_n^2\sum_{r=3}^{g-1}\frac{1}{2r}\binom{2r-2}{r-1}d^r.\]
We estimate the summation in the above expression in terms of its final summand. For any $r\geq4$,
\[\frac{\frac{1}{2(r-1)}\binom{2r-4}{r-2}d^{r-1}}{\frac{1}{2r}\binom{2r-2}{r-1}d^r} = \frac{r}{2d(2r-3)}\leq \frac{2}{5d}\leq \frac{1}{5}\]
where the last inequality uses that $d\geq2$. It follows that 
\[\sum_{r=3}^{g-1}\frac{1}{2r}\binom{2r-2}{r-1}d^r\leq \sum_{r=3}^{g-1}\left(\frac{1}{5}\right)^{g-1-r}\frac{1}{2(g-1)}\binom{2g-4}{g-2}d^{g-1}\leq \frac{5}{8(g-1)}\binom{2g-4}{g-2}d^{g-1}.\]
Therefore,
\[\mathbb{E}[\ell_n(R)]\leq \frac{5}{8(g-1)}\binom{2g-4}{g-2}d^{g-1}H_n^2\stackrel{\text{by\eqref{eq:rg}}}{\leq }\frac{5\delta}{8}n.\]
By Markov's Inequality and the fact that $\varepsilon=\frac\delta4$, we have
\[\mathbb{P}\left[\ell_n(R) > (\delta-(1-\delta)\varepsilon)n\right]\leq\frac{5\delta n/8}{(\delta-(1-\delta)\varepsilon)n}=\frac{5}{6+8\varepsilon} < 1-\varepsilon.\]
Therefore, 
\[\mathbb{P}\left[\ell_n(R) \leq (\delta-(1-\delta)\varepsilon)n\right]>\varepsilon.\]

So the restriction of $f$ to $V(G)$ is a fractional clique in $G$ with probability at least $1-\varepsilon$ and $\ell_n(R) \leq (\delta-(1-\delta)\varepsilon)n$ with probability greater than $\varepsilon$. Therefore, with positive probability, both events hold and we have, by weak duality,
\[
\chi_f(G)\geq \sum_{i\in V(G)}f(i) =\frac{\left(\ell_n([n])-\ell_n(R)\right)d}{(1+\varepsilon)n(W(d)+1)}\geq \frac{(1-\delta + (1-\delta)\varepsilon)nd}{(1+\varepsilon)n(W(d)+1)}=(1-\delta)\frac{d}{W(d)+1},
\]
completing the proof.
\end{proof}

\begin{remark}
Defining $G$ by deleting one vertex from every short cycle in $U_{n,d}$ was convenient for the analysis, as it ensures that the restriction of $f$ to $V(G)$ is still a fractional clique. It may be possible to obtain a sharper result by deleting one edge, rather than one vertex, from every such cycle; however, this would require proving a version of Lemma~\ref{lem: fractional clique} for all of the new independent sets $I$ which are created by deleting such edges. We have not explored this alternative strategy.
\end{remark}

We now turn our attention to the proof of Lemma~\ref{lem: fractional clique}, which applies the next two propositions.

\begin{proposition}
\label{prop:deterministic}
For any set $S\subseteq [n]$, 
\[\sum_{j\in S}\frac{|S\cap[j]|}{j}\geq \frac{\ell_n(S)^2 + |S|^2}{2n}.\]
\end{proposition}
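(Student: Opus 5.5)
The plan is to rewrite both sides of Proposition~\ref{prop:deterministic} in terms of the counting function $s_j\coloneqq|S\cap[j]|$ (with $s_0=0$), do a summation by parts, and finish with Cauchy--Schwarz.

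\emph{Expressing $\ell_n(S)$ through $s_j$.} Exchanging the order of summation, exactly as in \eqref{eq: sum ell}, gives
\[\ell_n(S)=\sum_{i\in S}\sum_{j=i}^n\frac1j=\sum_{j=1}^n\frac{s_j}{j}.\]
So $\ell_n(S)$ is a weighted sum of the $s_j$ with weights $1/j$.

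\emph{Telescoping the left-hand side.} Since $s_j^2-s_{j-1}^2=\mathbbm{1}(j\in S)(2s_j-1)$, we get
\[2\sum_{j\in S}\frac{s_j}{j}=\sum_{j=1}^n\frac{s_j^2-s_{j-1}^2}{j}+\sum_{j\in S}\frac1j .\]
Abel summation on the first sum gives
\[\sum_{j=1}^n\frac{s_j^2-s_{j-1}^2}{j}=\frac{|S|^2}{n}+\sum_{j=1}^{n-1}\frac{s_j^2}{j(j+1)}.\]
Writing $Q\coloneqq\sum_{j<n}\frac{s_j^2}{j(j+1)}$ and $h\coloneqq\sum_{j\in S}\frac1j$, the term $|S|^2/(2n)$ cancels and the proposition reduces to
\[nQ+nh\;\ge\;\ell_n(S)^2 .\]

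\emph{Cauchy--Schwarz.} Set $A=\sum_{j<n}s_j/j$, so that $\ell_n(S)=A+|S|/n$. Writing $s_j/j=\frac{s_j}{\sqrt{j(j+1)}}\cdot\sqrt{\tfrac{j+1}{j}}$ and applying Cauchy--Schwarz gives
\[A^2\le Q\sum_{j<n}\frac{j+1}{j}=Q\,(n-1+H_{n-1}).\]
The leading part $nQ$ then matches $A^2$ up to the error $Q(H_{n-1}-1)$.

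\emph{Main obstacle: absorbing the lower-order errors.} It remains to absorb two error terms into the spare term $nh$:
\begin{itemize}
\item the excess $Q(H_{n-1}-1)$ from Cauchy--Schwarz;
\item the cross terms $2A|S|/n+|S|^2/n^2$ coming from $\ell_n(S)=A+|S|/n$.
\end{itemize}
I would first use the trivial bounds $s_j\le j$ and $s_j\le|S|$, together with $h\ge\sum_{j\le|S|}1/j$ (which holds because $h$ is minimized when $S$ consists of the top $|S|$ elements), to compare these errors with $nh$.

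If that turns out to be too lossy, I would keep the $j=n$ term inside the Cauchy--Schwarz instead of separating it. Alternatively, I would use a weighting better adapted to the problem, for instance pairing $s_j$ against $\sqrt{j(j+1)}$ weights on only the indices where $s_j$ changes, to make the error exactly telescope against $h$. This balancing of constants is where I expect the real work to lie.

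Finally, I would sanity-check the extreme cases $S=[n]$ (where both sides are close, since $\ell_n([n])=n$) and $S=\{n\}$ to make sure no slack is lost.
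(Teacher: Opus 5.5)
Your reduction is correct. With $s_j=|S\cap[j]|$, the telescoping identity and Abel summation show that the proposition is equivalent to $nQ+nh\ge\ell_n(S)^2$. However, the proposal stops exactly where the proof has to happen, and the finishing route you sketch cannot work.

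The reduced inequality has no slack at $S=[n]$. There $s_j=j$, $Q=n-H_n$, $h=H_n$ and $\ell_n(S)=n$, so $nQ+nh=n^2=\ell_n(S)^2$. Your sanity check would find the two sides of the proposition \emph{equal}, not merely close.

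Your Cauchy--Schwarz step $A^2\le Q\,(n-1+H_{n-1})$ is an equality only when $s_j$ is proportional to $j+1$. At $S=[n]$ it therefore loses $n-2+H_n-H_nH_{n-1}$, which is positive for every $n\ge 3$ and of order $n$. Consequently the residual inequality you set out to prove,
\[
nh\ge Q(H_{n-1}-1)+2A|S|/n+|S|^2/n^2,
\]
is false at $S=[n]$; already for $n=3$ it reads $\tfrac{66}{12}\ge\tfrac{67}{12}$.

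No combination of $s_j\le j$, $s_j\le|S|$ and lower bounds on $h$ can rescue a false inequality. Your fallback variants have the same defect: any Cauchy--Schwarz that bounds $A^2$ by a multiple of $Q$ is tight only when $s_j\propto j+1$. The term $nh$ is not spare; at $S=[n]$ it is entirely consumed.

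The missing idea is to spend $h$ before applying Cauchy--Schwarz, not after. Abel-summing $h$ as well gives $h=\frac{|S|}{n}+\sum_{j<n}\frac{s_j}{j(j+1)}$, hence
\[
nQ+nh=|S|+n\sum_{j<n}\frac{s_j(s_j+1)}{j(j+1)}\ \ge\ \frac{|S|^2}{n}+n\sum_{j<n}\frac{s_j^2}{j^2}=n\sum_{j=1}^{n}\Bigl(\frac{s_j}{j}\Bigr)^2\ \ge\ \Bigl(\sum_{j=1}^{n}\frac{s_j}{j}\Bigr)^2=\ell_n(S)^2 .
\]
The first inequality uses $|S|\le n$ and $\frac{s_j+1}{j+1}\ge\frac{s_j}{j}$, which is equivalent to $s_j\le j$. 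The second is Cauchy--Schwarz with \emph{uniform} weights. Every step is tight at $S=[n]$, which is exactly what your $(j+1)/j$ weighting lacked.

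Completed this way, your argument is a genuinely different proof from the paper's. The paper inducts on $n$, splits according to whether $n\in S$, and closes each case with an explicit sum-of-squares identity. The summation-by-parts version is non-inductive and makes the extremal case $S=[n]$ transparent. The paper's version only ever needs a one-step algebraic check.
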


\begin{proof}
We prove the equivalent inequality
\[2n\left(\sum_{j\in S}\frac{|S\cap[j]|}{j}\right)\geq \ell_n(S)^2 + |S|^2\]
by induction on $n$. In the base case $n=1$, if $S=\emptyset$, then both sides of the inequality are equal to $0$ and, if $S=\{1\}$, then both sides are equal to $2$. Thus, the inequality holds for $n=1$.

Let $n\geq2$ and define $S_0\coloneqq S\cap[n-1]$. By the induction hypothesis, we have
\begin{equation}
\label{eq:all but last}
2(n-1)\left(\sum_{j\in S_0}\frac{|S_0\cap[j]|}{j}\right)\geq \ell_{n-1}(S_0)^2 + |S_0|^2
\end{equation}
There are two cases to consider: namely, $n\notin S$ or $n\in S$. 

\begin{case}
$n\notin S$.
\end{case}

In this case, 
\begin{align*}
2n\left(\sum_{j\in S}\frac{|S\cap[j]|}{j}\right)&=\frac{n}{n-1}\left(2(n-1)\left(\sum_{j\in S_0}\frac{|S_0\cap[j]|}{j}\right)\right)\\
&\stackrel{\text{by \eqref{eq:all but last}}}{\geq} \frac{n}{n-1}\left(\ell_{n-1}(S_0)^2 + |S_0|^2\right).
\end{align*}
By definition, we have that $\ell_n(i)=\ell_{n-1}(i)+\frac{1}{n}$ for every $i\in [n-1]$. Therefore, $\ell_{n}(S)=\ell_{n-1}(S_0)+\frac{|S_0|}{n}$. So letting $\ell\coloneqq \ell_{n-1}(S_0)$ and $s\coloneqq |S_0|$, we see that, to complete the proof in this case, it suffices to show 
\begin{equation}\label{eq:n notin S}\frac{n}{n-1}\left(\ell^2 + s^2\right) \geq \left(\ell+\frac{s}{n}\right)^2+s^2.\end{equation}
The difference between the left and right sides of \eqref{eq:n notin S} is
\begin{align*}
\frac{n}{n-1}\left(\ell^2 + s^2\right) - \left(\ell+\frac{s}{n}\right)^2-s^2&=\frac{n}{n-1}\ell^2 + \frac{n}{n-1}s^2 - \ell^2-\frac{2s\ell}{n}-\frac{s^2}{n^2}-s^2\\
&=\frac{n^3\ell^2 + n^3s^2-n^2(n-1)\ell^2-2n(n-1)s\ell - s^2(n-1)-s^2n^2(n-1)}{n^2(n-1)}\\
&=\frac{(n\ell-(n-1)s)^2+ns^2}{n^2(n-1)},
\end{align*}
which is a sum of squares, and hence non-negative. Therefore, \eqref{eq:n notin S} holds. 

\begin{case}
$n\in S$.
\end{case}

In this case, we have
\begin{align*}
2n\left(\sum_{j\in S}\frac{|S\cap[j]|}{j}\right)&=2n\left(\sum_{j\in S_0}\frac{|S_0\cap[j]|}{j} + \frac{|S|}{n}\right)\\
&=\frac{n}{n-1}\left(2(n-1)\left(\sum_{j\in S_0}\frac{|S_0\cap[j]|}{j}\right)\right)+2|S|\\
&\stackrel{\text{by \eqref{eq:all but last}}}{\geq} \frac{n}{n-1}\left(\ell_{n-1}(S_0)^2 + |S_0|^2\right)+2(|S_0|+1).
\end{align*}
Letting $\ell\coloneqq \ell_{n-1}(S_0)$ and $s\coloneqq |S_0|$, it suffices to show that
\begin{equation}\label{eq:n in S}\frac{n}{n-1}\left(\ell^2 + s^2\right)+2(s+1) \geq \left(\ell+\frac{s+1}{n}\right)^2+(s+1)^2.\end{equation}
Similar to the previous case, a straightforward algebraic expansion shows that the difference between the left and right hand sides of \eqref{eq:n in S} is
\begin{align*}
\frac{(n\ell-(n-1)(s+1))^2+n(n-s-1)^2}{n^2(n-1)},
\end{align*}
which is a sum of squares, and hence non-negative. Therefore, \eqref{eq:n in S} holds. 
\end{proof}

Next, we use the previous proposition to get an upper bound on the probability that a fixed subset of $[n]$ is independent. 

\begin{proposition}
\label{prop:probability bound}
For any fixed set $S\subseteq [n]$,
\[\mathbb{P}(\text{$S$ is independent in $U_{n,d}$})\leq \exp\left(dH_n - \frac{d}{2n}\left(\ell_n(S)^2+|S|^2\right)\right).\]
\end{proposition}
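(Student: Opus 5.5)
The plan is to use the independence of the choices $N_i^-$ across $i$ and bound each factor separately. The event that $S$ is independent in $U_{n,d}$ is contained in the intersection over $j\in S$ of the events $A_j\coloneqq\{N_j^-\cap (S\cap[j-1])=\emptyset\}$. Indeed, every edge of $U_{n,d}[S]$ joins some $j\in S$ to an element of $N_j^-\cap S$ with smaller label. Vertex $0$ is not in $S$, since $S\subseteq[n]$, so only the actual vertices matter. The events $A_j$ depend on distinct independent random sets, so
\[\mathbb{P}(\text{$S$ independent})\le\prod_{j\in S}\mathbb{P}(A_j).\]

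Next, bound each factor. Set $s_j\coloneqq|S\cap[j-1]|=|S\cap[j]|-1$ for $j\in S$. The set $N_j^-$ is a uniform $m$-subset of the $j$-element set $\{0,\dots,j-1\}$, where $m=\min\{j,d\}$, and $A_j$ asks that it avoid $s_j$ prescribed elements. If $j\le d$ then $N_j^-$ is everything, so $A_j$ fails unless $s_j=0$. Otherwise
\[\mathbb{P}(A_j)=\prod_{t=0}^{d-1}\frac{j-s_j-t}{j-t}\le\left(1-\frac{s_j}{j}\right)^d\le\exp\left(-\frac{d s_j}{j}\right).\]
In the case $j\le d$ the same bound $\exp(-ds_j/j)$ still holds, since then $\mathbb{P}(A_j)\in\{0,1\}$ with value $1$ only when $s_j=0$. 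Hence in all cases
\[\mathbb{P}(\text{$S$ independent})\le\exp\left(-d\sum_{j\in S}\frac{|S\cap[j]|-1}{j}\right)=\exp\left(d\sum_{j\in S}\frac1j-d\sum_{j\in S}\frac{|S\cap[j]|}{j}\right).\]

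Finally, use $\sum_{j\in S}\frac1j\le H_n$ and apply Proposition~\ref{prop:deterministic} to the second sum. This gives exactly
\[\exp\left(dH_n-\frac{d}{2n}\left(\ell_n(S)^2+|S|^2\right)\right).\]
None of these steps is a serious obstacle. The only care needed is the case $j\le d$, where $N_j^-$ is forced to contain all of $\{0,\dots,j-1\}$, together with the off-by-one between $|S\cap[j-1]|$ and $|S\cap[j]|$. That discrepancy is exactly what the additive $dH_n$ term absorbs.
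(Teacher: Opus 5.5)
Your proposal is correct and follows essentially the same route as the paper. You bound each factor $\mathbb{P}(N_j^-\cap S=\emptyset)$ by $\exp(-d|S\cap[j-1]|/j)$, treating $j\le d$ separately, multiply these bounds using independence of the $N_j^-$, and absorb the off-by-one through $\sum_{j\in S}1/j\le H_n$ before applying Proposition~\ref{prop:deterministic}.
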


\begin{proof}
A set $S\subseteq [n]$ is independent if and only if, for every vertex $j\in S$, the set $N_j^-$ is disjoint from $S$. If $j>d$, then there are precisely $\binom{j}{d}$ choices for the set $N_j^-\subseteq \{0,\dots,j-1\}$ and $\binom{j-|S\cap [j-1]|}{d}$ of them avoid $S$. Thus,
\begin{align*}
\mathbb{P}\left(N_j^-\cap S = \emptyset\right)&=\frac{\binom{j-|S\cap[j-1]|}{d}}{\binom{j}{d}}\\
&= \frac{(j-|S\cap[j-1]|)(j-|S\cap[j-1]|-1)\cdots(j - d -|S\cap[j-1]| + 1)}{j(j-1)\cdots(j - d + 1)}\\
&\leq \left(1-\frac{|S\cap[j-1]|}{j}\right)^{d}.
\end{align*}
Using the standard inequality $1-x\leq e^{-x}$ for $x\ge0$, we get
\begin{equation}
\label{eq:looking back}
\mathbb{P}\left(N_j^-\cap S = \emptyset\right)\leq \exp\left(-\frac{d|S\cap[j-1]|}{j}\right)
\end{equation}
for all $j>d$. We observe that the same inequality also holds for $j\leq d$. Indeed, every such vertex satisfies $N_j^-=\{0,\dots,j-1\}$. So if $S\cap [j-1]\neq \emptyset$, then the left side is zero and the inequality holds trivially; on the other hand, if $S\cap [j-1]= \emptyset$, then both sides of \eqref{eq:looking back} are $1$. So \eqref{eq:looking back} holds for all $1\leq j\leq n$. 

By \eqref{eq:looking back} and the fact that $N_i^-$ and $N_j^-$ for $i\neq j$ are chosen independently, we have
\[\mathbb{P}(\text{$S$ is independent in $U_{n,d}$})\leq \exp\left(-d\sum_{j\in S}\frac{|S\cap[j-1]|}{j}\right).\]
Finally, we have 
\[\sum_{j\in S}\frac{|S\cap[j-1]|}{j} \ge \sum_{j\in S}\frac{|S\cap[j]|-1}{j}= \sum_{j\in S}\frac{|S\cap[j]|}{j}-\sum_{j\in S}\frac{1}{j}\geq \frac{\ell_n(S)^2+|S|^2}{2n} - H_n\]
where the last inequality uses Proposition~\ref{prop:deterministic}. This completes the proof. 
\end{proof}

We are now ready to prove Lemma~\ref{lem: fractional clique}.

\begin{proof}[Proof of Lemma~\ref{lem: fractional clique}]
For each $0\leq s\leq n$, define
\[X_s\coloneqq \left|\left\{S\subseteq [n]: \text{$|S|=s$, $S$ is independent in $U_{n,d}$ and $\ell_n(S)\geq \left(1+\varepsilon\right)\frac{n(W(d)+1)}{d}$}\right\}\right|\]
and define $X\coloneqq \sum_{s=0}^nX_s$. By Markov's inequality, it suffices to show that, if $d$ and $n$ satisfy \eqref{eq: fractional clique}, then $\mathbb{E}[X]\leq \varepsilon$. 

By Proposition~\ref{prop:probability bound}, we have, for any $0\leq s\leq n$, a set $S\subseteq [n]$ of cardinality $s$ satisfying $\ell_n(S)\geq \left(1+\varepsilon\right)\frac{n(W(d)+1)}{d}$ is independent in $U_{n,d}$ with probability at most
\begin{align*}
    &\exp\left(dH_n - \frac{d}{2n}\left(\left(1+\varepsilon\right)^2\left(\frac{n(W(d)+1)}{d}\right)^2+s^2\right)\right) \\
    &\qquad \qquad \qquad \qquad = \exp\left(dH_n - \frac{nd}{2}\left(\left(1+\varepsilon\right)^2\left(\frac{W(d)+1}{d}\right)^2+\left(s/n\right)^2\right)\right).
\end{align*}
Therefore,
\[\mathbb{E}(X_s)\leq \binom{n}{s}\exp\left(dH_n - \frac{nd}{2}\left(\left(1+\varepsilon\right)^2\left(\frac{W(d)+1}{d}\right)^2+\left(s/n\right)^2\right)\right)\]
Recall that $\binom{n}{s}\leq \exp(nh(s/n))$ where $h(x)=x\log(1/x)+(1-x)\log(1/(1-x))$ for $x\in(0,1)$ and $h(0)=h(1)=0$ (this formula is equivalent to the usual one in base $2$ as opposed to base $e$). Therefore, 
\begin{align*}
\mathbb{E}(X_s)&\leq \exp\left(dH_n + n\left(h(s/n) -\frac{d}{2}\left(s/n\right)^2 - \frac{d}{2}\left(1+\varepsilon\right)^2\left(\frac{W(d)+1}{d}\right)^2\right)\right)\\
&\leq \exp\left(dH_n + n\left(\sup_{x\in [0,1]}\left[h(x) -\frac{d}{2}x^2\right] - \frac{\left(1+\varepsilon\right)^2(W(d)+1)^2}{2d}\right)\right).
\end{align*}
Since the function $\log(t)$ for $t>0$ is concave, it lies below its tangent line at $t=1$. Therefore, $\log(t)\leq t-1$ and so
\[h(x)=x\log(1/x)+(1-x)\log(1/(1-x))\leq x\log(1/x)+(1-x)(x/(1-x)) = x\log(1/x)+x.\]
So we have $h(x)-\frac{d}{2}x^2\leq \phi(x)$ where $\phi(x)\coloneqq x\log(1/x)+x - \frac{d}{2}x^2$ for $x\in(0,1]$ and $\phi(0)=0$. Then, for $x\in (0,1]$, we have $\phi'(x)=-\log(x)-dx$ and $\phi''(x)=-\frac{1}{x}-d<0$. Therefore, the unique maximum of $\phi(x)$ for $x\in [0,1]$ is achieved when $dx=\log(1/x)$ or, equivalently, $x=\frac{W(d)}{d}$. Thus, plugging this value in for $x$ yields 
\begin{align*}
\sup_{x\in[0,1]}\phi(x)&= \frac{W(d)}{d}\log(d/W(d)) + \frac{W(d)}{d} - \frac{W(d)^2}{2d}\\
&=\frac{W(d)^2+2W(d)}{2d}\\
&=\frac{(W(d)+1)^2-1}{2d}.
\end{align*}
Consequently, 
\begin{align*}
\mathbb{E}(X)&=\sum_{s=0}^n\mathbb{E}(X_s)\\
&\leq (n+1)\exp\left(dH_n + \frac{n}{2d}\left((W(d)+1)^2 - 1 - (1+\varepsilon)^2(W(d)+1)^2\right)\right)\\
&=(n+1)\exp\left(dH_n - \frac{n}{2d}\left(1 + (2\varepsilon+\varepsilon^2)(W(d)+1)^2\right)\right)\\
&\leq(n+1)\exp\left(dH_n - \frac{n}{2d}\right).
\end{align*}

It remains to show that $(n+1)\exp\left(dH_n - \frac{n}{2d}\right)\leq \varepsilon$. By \eqref{eq: fractional clique}, we know that $\frac{d^3H_n^2}{2n}\leq \varepsilon$ and so it suffices to prove that $(n+1)\exp\left(dH_n - \frac{n}{2d}\right)\leq \frac{d^3H_n^2}{2n}$ or, equivalently, that
\begin{equation}
\label{eq:finishingOff}
\log\left(\frac{(n+1)\exp\left(dH_n-\frac{n}{2d}\right)}{\frac{d^3H_n^2}{2n}}\right)<0.
\end{equation}
Since $\varepsilon\leq \frac14$, \eqref{eq: fractional clique} implies that $\frac{n}{2d}\geq d^2H_n^2$. Using this together with $H_n\geq \log(n+1)\geq \log n$, we get
\begin{align*}
\log\left(\frac{(n+1)\exp\left(dH_n-\frac{n}{2d}\right)}{\frac{d^3H_n^2}{2n}}\right) &= \log(n+1)+dH_n - \frac{n}{2d} + \log(2n)-3\log d-2\log H_n\\
&\leq(d+2)H_n - d^2H_n^2  +\log 2-3\log d-2\log H_n
\end{align*}
Since $d\geq2$ and $H_n\geq1$, we have $d^2H_n^2\geq (d+2)H_n$. Therefore, the sum of the first two terms in the above expression is non-positive. Also, since $d\geq2$, the sum of the last three terms is negative. Thus, \eqref{eq:finishingOff} holds, which completes the proof.
\end{proof}

Finally, we present the proof of Lemma~\ref{lem: count cycles}. 

\begin{proof}[Proof of Lemma~\ref{lem: count cycles}]
For any $1\leq i<j\leq n$ with $j\geq d$, the probability that the set $N_j^-\subseteq\{0,\dots,j-1\}$ contains $i$ is precisely  $\frac{\binom{j-1}{d-1}}{\binom{j}{d}}=\frac{d}{j}$. Moreover, the bound 
\begin{equation}
\label{eq:onePrev}
\mathbb{P}(i\in N_j^-)\leq \frac{d}{j}
\end{equation}
also holds true for $1\leq j<d$ as the right side would be greater than $1$. Similarly, for any $1\leq i_1<i_2<j\leq n$, we have 
\begin{equation}
\label{eq:twoPrev}\mathbb{P}(i_1,i_2\in N_j^-)\leq \frac{d(d-1)}{j(j-1)}\leq\left(\frac{d}{j}\right)^2.
\end{equation}

Let $(v_1,\dots,v_r)$ be an $r$-tuple of distinct vertices in $[n]$ and, for convenience, let $v_{r+1}\coloneqq v_1$. Our goal is to bound the probability that these $r$ vertices form a cycle in $U_{n,d}$ in this prescribed order. The presence or absence of an edge $v_iv_{i+1}$ in this cycle depends only on the choice of $N_{v_i}^-$ or $N_{v_{i+1}}^-$, depending on which of $v_i$ or $v_{i+1}$ appears later in $[n]$. Also, by construction, the sets $N_{v_i}^-$ and $N_{v_j}^-$ for $v_i\neq v_j$ are chosen independently of one another. Thus, by \eqref{eq:onePrev} and \eqref{eq:twoPrev}, we have 
\[\mathbb{P}(v_iv_{i+1}\in E(U_{n,d})\text{ for all }1\leq i\leq r)\leq d^r\prod_{i=1}^r\frac{1}{\max\{v_i,v_{i+1}\}}.\]
Therefore, the expected number of $r$-cycles in $U_{n,d}$ is at most
\[\frac{1}{2r}\sum_{\substack{(v_1,\dots,v_r)\in [n]^r\\ v_1,\dots,v_r\text{ distinct}}}d^r\prod_{i=1}^r\frac{1}{\max\{v_i,v_{i+1}\}}\]
where the factor of $1/2r$ comes from the fact that each cycle is counted $2r$ times. If we drop the condition that the vertices are distinct, then it only increases the sum. Thus, the expected number of $r$-cycles is at most 
\[\frac{d^r}{2r}\sum_{(v_1,\dots,v_r)\in [n]^r}\prod_{i=1}^r\frac{1}{\max\{v_i,v_{i+1}\}} = \frac{d^r}{2r}\Tr(A^r),\]
where $A$ is the $n\times n$ matrix with $A_{i,j}=\frac{1}{\max\{i,j\}}$ for all $1\leq i,j\leq n$. The proof of the lemma boils down to establishing the following:
\[\Tr(A^r)\leq \binom{2r-2}{r-1}H_n.\]
To this end, recall that 
\begin{equation}
\label{eq:Tr}
\Tr(A^r)=\sum_{(v_1,\dots,v_r)\in [n]^r}\prod_{i=1}^r\frac{1}{\max\{v_i,v_{i+1}\}}.
\end{equation}
Roughly speaking, we start by reducing the problem of computing $\Tr(A^r)$, which is analogous to counting closed walks of length   $r$ in a graph, to counting walks of length $r-2$ by conditioning on the choice of the largest index in $[n]$ that appears among $v_1,\dots,v_r$. To this end, for each $s,m\geq1$, define
\begin{equation}\label{eq:P_s(m)}P_s(m)\coloneqq \sum_{(x_1,\dots,x_s)\in [m]^s}\prod_{i=1}^{s-1}\frac{1}{\max\{x_i,x_{i+1}\}}.\end{equation}

Let us show, by induction on $s$, that
\begin{equation}
\label{eq:PCatalan}
P_s(m)\leq \frac{1}{s+1}\binom{2s}{s}m.
\end{equation}
In the case $s=1$, $P_1(m)$ is the sum over all $x_1\in [m]$ of an empty product, and so it is equal to $\sum_{x_1=1}^m1=m$. The right hand side of \eqref{eq:PCatalan} is also equal to $m$ in this case and so the result holds for $s=1$. Now, suppose that $s\geq2$. For a given choice of $(x_1,\dots,x_s)\in [m]^s$, we let $q$ be an index such that $x_q=\max\{x_1,\dots,x_s\}$. We bound $P_s(m)$ from above by summing the contribution to \eqref{eq:P_s(m)} of each choice of $q$; note that, for some tuples $(x_1,\dots,x_s)$, the choice of $q$ is not unique, and so this will be an overestimate. Additionally, when $q\in\{1,s\}$, the factor $\frac{1}{x_q}$ only appears once in $\prod_{i=1}^{s-1}\frac{1}{\max\{x_i,x_{i+1}\}}$ and, otherwise, it appears twice. So by induction, the contribution from $q=1$ is at most
\[\sum_{x_1=1}^m\frac{1}{x_1}P_{s-1}(x_1)\leq \sum_{x_1=1}^m\frac{1}{x_1}\cdot \frac{1}{s}\binom{2s-2}{s-1}x_1 = \frac{1}{s}\binom{2s-2}{s-1}\cdot m\]
and the same bound is valid for the contribution from $q=s$. Next, suppose that $2\leq q\leq s-1$. In this case, by induction, the contribution from $q$ is at most 
\begin{align*}\sum_{x_q=1}^m\frac{1}{x_q^2}P_{q-1}(x_q)P_{s-q}(x_q)&\leq \sum_{x_q=1}^m\frac{1}{x_q^2}\left(\frac{1}{q}\binom{2q-2}{q-1}x_q\right)\left(\frac{1}{s-q+1}\binom{2s-2q}{s-q}x_q\right) \\
&= \frac{1}{q}\binom{2q-2}{q-1}\cdot \frac{1}{s-q+1}\binom{2s-2q}{s-q}\cdot m\end{align*}
Putting this all together, we get
\[P_s(m)\leq m\left(\frac{2}{s}\binom{2s-2}{s-1} + \sum_{q=2}^{s-1}\left[\frac{1}{q}\binom{2q-2}{q-1}\cdot \frac{1}{s-q+1}\binom{2s-2q}{s-q}\right]\right)\]
By re-indexing the sum by $a=q-1$ and incorporating the other two terms, we get that
\[P_s(m)\leq m \sum_{a=0}^{s-1}\left[\frac{1}{a+1}\binom{2a}{a} \cdot\frac{1}{s-a}\binom{2s-2a-2}{s-a-1}\right] = \frac{1}{s+1}\binom{2s}{s}m\]
by the standard Catalan recurrence (see, e.g.,~\cite[(1.1)]{Stanley15}). Thus, \eqref{eq:PCatalan} is true. 

Now, to complete the proof of the claim, we bound $\Tr(A^r)$ in terms of the quantities $P_s(m)$ and apply \eqref{eq:PCatalan}. For a given choice of $(v_1,\dots,v_r)\in [n]^r$, let $q$ be an index such that $v_q=\max\{v_1,\dots,v_r\}$. Then, by symmetry, every value of $q$ contributes the same amount to \eqref{eq:Tr}, and so it suffices to consider the contribution of $q=r$, which is at most
\[\sum_{v_r=1}^n\frac{1}{v_r^2}P_{r-1}(v_r)\stackrel{\text{by \eqref{eq:PCatalan}}}{\leq} \frac{1}{r}\binom{2r-2}{r-1}\sum_{v_r=1}^n\frac{1}{v_r} = \frac{1}{r}\binom{2r-2}{r-1}H_n.\]
Summing over all $q=1,\dots,r$ contributes at most a factor of $r$, thereby proving \eqref{eq:Tr} and, hence, Lemma~\ref{lem: count cycles}. 
\end{proof}

\begin{remark}
The matrix $A$ in the proof of Lemma~\ref{lem: count cycles} is the same as the matrix with entries $k_{1/2}(i,j)$ where $k_{1/2}$ is defined as in~\cite[Example~3.2]{Pushnitski23}. So applying~\cite[Theorem~2.1]{Pushnitski23} with $g:\mathbb{R}\to \mathbb{R}$ being a Lipschitz continuous function such that $g(x)=x^r$ for all $x$ in an interval that contains the spectrum of $A$, we get that 
\[\lim_{n\to\infty}\frac{\Tr(A^r)}{\log n}= \binom{2r-2}{r-1}\]
which shows that the bound in the proof of Lemma~\ref{lem: count cycles} gives the correct leading asymptotic for $\Tr(A^r)$.
\end{remark}

\section{Concluding remarks}\label{section: concluding remarks}
Our result should be compared to Kim's proof~\cite{Kim} that girth-$5$ graphs with maximum degree $\Delta$ have chromatic number $(1+o(1))\frac{\Delta}{\log\Delta}$. 
We note that the girth-$5$ constraint can be relaxed to $C_4$-freeness in his proof.
Kim's strategy, using the nibble method, is very different from ours, and seems hard to generalize to the degeneracy setting.

\medskip

The outstanding open problem to resolve is Conjecture~\ref{conj:MS}\ref{conj: ub} of Martinsson and Steiner, i.e., that every triangle-free $d$-degenerate graph has fractional chromatic number at most $(1+o(1))\frac{d}{\log d}$. Recall that they proved an upper bound $(4+o(1))\frac{d}{\log d}$, and our result Theorem~\ref{theo: rg} shows that $(1+o(1))\frac{d}{\log d}$ would be best possible.

The second author~\cite{Dhawan} gave a new proof (and an extension to various related problems) of the result of Martinsson and Steiner~\cite{MartStein}, using a variant of Algorithm~\ref{algorithm: fcp}. Observe that in this algorithm we have a choice of how to set $\alpha$. We briefly explain why $\alpha\ge \frac{\log d}{d}$ is a bad choice, and it follows from our analysis that we cannot hope to establish a better bound than $\chi_f(G)\le \frac{d}{\log d}$ using Algorithm~\ref{algorithm: fcp}.

First, observe that if $v\in V(G)$ has $d$ left neighbors which form an independent set, and these vertices all have weight $\eta\frac{\log d}{d}$ for some $\eta\ge1$ at the time Algorithm~\ref{algorithm: fcp} processes them, then there are two possible outcomes. One is that a left neighbor is selected into $I$, in which case $v$ is not. The other is that no left neighbor is selected into $I$, in which case we observe the product
\[\alpha\left(1-\eta\frac{\log d}{d}\right)^{-d}\approx\alpha e^{\eta\log d}=\alpha d^\eta\]
is larger than $1$ and so certainly larger than we can choose $\hat{p}$ to be: again, the algorithm does not select $v$ into $I$.

This observation immediately explains why we cannot set $\alpha\ge\tfrac{\log d}{d}$: if a vertex $v$ has $d$ left-neighbors each of which has no left-neighbors itself, then the algorithm will never select $v\in I$.
It is tempting to believe that a better analysis for triangle-free graphs might improve the $4$. Unfortunately, this is not the case, as we now explain.

Suppose, as usual, that the initial weight of each vertex is $\alpha$ and define $\gamma$ to be $\frac{\log d}{\alpha d}$ so that $\alpha=\gamma^{-1}\frac{\log d}{d}$. Suppose that a vertex $v$ has $d$ left-neighbors forming an independent set $B$, each of which has left-neighborhood a set $A$ of $\frac{d\gamma\log\gamma }{\log d}$ vertices, with the vertices in $A$ having no left-neighbors (note that $A\cup B$ induces a complete bipartite graph). Again, there are two possibilities for the running of Algorithm~\ref{algorithm: fcp}. With probability
\[(1-\alpha)^{|A|}=(1-\alpha)^{ d\gamma\log\gamma/\log d}\approx \gamma^{-1}\]
no vertex of $A$ is selected into $I$, all the vertices of $B$ have weight at least $\gamma\alpha$, and if this is at least $\tfrac{\log d}{d}$ then $v$ is not selected into $I$. The other possibility is, with probability approximately $1-\gamma^{-1}$, at least one vertex of $A$ is selected into $I$, the weight of all vertices in $B$ is zero, and $v$ is selected into $I$ with probability $\alpha$. Thus, the probability that $v$ is selected is approximately $\alpha(1-\gamma^{-1})=\gamma^{-1}(1-\gamma^{-1})\frac{\log d}{d}$, which is at most $\frac{\log d}{4d}$, with the unique optimum being $\gamma=2$. This demonstrates that for graphs which may contain $4$-cycles, the analysis of~\cite{Dhawan} is asymptotically optimal.\footnote{We note that the argument of \cite{Dhawan} is in the setting of $r$-uniform hypergraphs having girth at least $4$. One can extract the argument for triangle-free graphs by setting $r = 2$.}

\medskip

The place where we use $C_4$-freeness in our argument is when we prove $(S_i)_{i=0}^{k-1}$ forms a supermartingale, specifically the last two cases when we process a vertex $v_i$ at distance at most $2$ from $v_k$, to argue that $|N_R(v_i) \cap N_L(v_k)| \leq 1$.
This condition is implied by forbidding triangles as well as the so-called ``Co-Neighbor Path'' configuration of $C_4$ in a degeneracy ordering of $G$ (see Fig.~\ref{fig:cycles}).
In particular, when considering triangle-free graphs, we may allow the graph to contain $C_4$'s of the other two possible configurations in a degeneracy ordering.
As a result, the Co-Neighbor Path configuration seems to be the main obstacle to proving Conjecture~\ref{conj:MS}\ref{conj: ub}.
Although, as discussed earlier, our approach cannot surpass the current bound of $4 + o(1)$ due to Martinsson and Steiner.

\begin{figure}[htb!]
 \begin{subfigure}{0.3\textwidth}
 \centering
     \begin{tikzpicture}[scale=0.6]
            \clip (-1,-1) rectangle (7,1);
            \node[circle,fill=black,draw,inner sep=0pt,minimum size=4pt] (a) at (0,0) {};
            \node[circle,fill=black,draw,inner sep=0pt,minimum size=4pt] (b) at (2,0) {};
            \node[circle,fill=black,draw,inner sep=0pt,minimum size=4pt] (c) at (4,0) {};
            \node[circle,fill=black,draw,inner sep=0pt,minimum size=4pt] (d) at (6,0) {};

            \node[anchor=north] (v1) at (a) {$v_1$};
            \node[anchor=north] (v2) at (b) {$v_2$};
            \node[anchor=north] (v3) at (c) {$v_3$};
            \node[anchor=north] (v4) at (d) {$v_4$};

            \draw (a) edge[bend left] (d) (a) edge[bend left] (c) (b) edge[bend right] (d) (b) edge[bend right] (c);
        \end{tikzpicture}
        \caption{The Open Wedge}
 \end{subfigure}
 \hfill
 \begin{subfigure}{0.3\textwidth}
 \centering
     \begin{tikzpicture}[scale=0.6]
            \clip (-1,-1) rectangle (7,1);
            \node[circle,fill=black,draw,inner sep=0pt,minimum size=4pt] (a) at (0,0) {};
            \node[circle,fill=black,draw,inner sep=0pt,minimum size=4pt] (b) at (2,0) {};
            \node[circle,fill=black,draw,inner sep=0pt,minimum size=4pt] (c) at (4,0) {};
            \node[circle,fill=black,draw,inner sep=0pt,minimum size=4pt] (d) at (6,0) {};

            \node[anchor=north] (v1) at (a) {$v_1$};
            \node[anchor=north] (v2) at (b) {$v_2$};
            \node[anchor=north] (v3) at (c) {$v_3$};
            \node[anchor=north] (v4) at (d) {$v_4$};

            \draw (a) edge[bend left] (d) (a) edge[bend right] (b) (b) edge[bend right] (c) (c) edge[bend right] (d);
        \end{tikzpicture}
        \caption{The Cycle Path}
 \end{subfigure}
 \hfill
 \begin{subfigure}{0.3\textwidth}
 \centering
     \begin{tikzpicture}[scale=0.6]
            \clip (-1,-1) rectangle (7,1);
            \node[circle,fill=black,draw,inner sep=0pt,minimum size=4pt] (a) at (0,0) {};
            \node[circle,fill=black,draw,inner sep=0pt,minimum size=4pt] (b) at (2,0) {};
            \node[circle,fill=black,draw,inner sep=0pt,minimum size=4pt] (c) at (4,0) {};
            \node[circle,fill=black,draw,inner sep=0pt,minimum size=4pt] (d) at (6,0) {};

            \node[anchor=north] (v1) at (a) {$v_1$};
            \node[anchor=north] (v2) at (b) {$v_2$};
            \node[anchor=north] (v3) at (c) {$v_3$};
            \node[anchor=north] (v4) at (d) {$v_4$};

            \draw (a) edge[bend left] (c) (a) edge[bend left] (b) (b) edge[bend right] (d) (c) edge[bend right] (d);
        \end{tikzpicture}
        \caption{The Co-Neighbor Path}
 \end{subfigure}
     \caption{Possible configurations of $C_4$ in $G$.}
    \label{fig:cycles}

\end{figure}

We note that we may relax the $C_4$-freeness condition further.
Indeed, it suffices to insist that for every pair of vertices $v_i$ and $v_k$, the set $N_R(v_i) \cap N_L(v_k)$ is sufficiently small (but possibly $\gg 1$).
More formally, if this set contained the $\ell$ vertices $u_1,\dots,u_\ell$  we would need to know
\[\E\left[\prod_{j=1}^\ell(1+p_i(u_j)\big|p_{i-1}(\cdot)\right]\] is sufficiently close to
\[\prod_{j=1}^\ell \left(1+\E\left[p_i(u_j)\big|p_{i-1}(\cdot)\right]\right)\,,\]
which, if $\ell$ is much smaller than $\hat{p}^{-1}$, is easily seen to be true. This bound $\ell\ll\hat{p}^{-1}$ is presumably not sharp (any given vertex is not too likely to get weight close to $\hat{p}$) and it would be interesting to determine whether a codegree bound of the form $d^\gamma$ for some positive $\gamma$ can be achieved.
We note that a proof for $\gamma = 1 - o(1)$ would strictly generalize a result of Campos, Jenssen, Michelen, and Sahasrabudhe~\cite{campos2023new} on the independence number of graphs with maximum codegree at most $\Delta^{1-o(1)}$; see, also, recent work of Bradshaw, Methuku, Wigal, and the second author~\cite{bradshaw2025toward} for an extension of their result to the chromatic number.

\begin{problem}
    Let $G$ be a $d$-degenerate graph satisfying $|N_L(u) \cap N_R(v)| \leq d^{1-o(1)}$ in some degeneracy ordering.
    Is $\chi_f(G) \leq (1 + o(1))\frac{d}{\log d}$?
\end{problem}

In a related direction, Anderson, Bernshteyn, and the second author~\cite{AndersonBernshteynDhawan} extended Kim's bound to $K_{t,t}$-free graphs $G$, showing that $\chi(G) \le (1 + o(1))\frac{\Delta}{\log \Delta}$. 
A key structural observation in their proof is that for every vertex $v \in V(G)$, there are relatively few vertices $u$ sharing a large common neighborhood with $v$. 
Leveraging this property within a nibble framework yields the desired bound. 
It is then natural to ask whether an analogous result holds for fractional coloring in the degeneracy setting.

\begin{problem}
    Let $G$ be a $d$-degenerate $K_{t, t}$-free graph.
    Is $\chi_f(G) \leq (1 + o(1))\frac{d}{\log d}$?
\end{problem}

\medskip

To understand where the factor-$2$ gap between the lower bounds for maximum degree $\Delta$ and degeneracy $d$ comes from, observe that the lower bound construction for maximum degree $\Delta$ has average degree close to $\Delta$, whereas our $d$-degenerate lower bound construction has average degree close to $2d$: this $2$ is responsible for the difference.

It is interesting to ask for an intuition as to why there is (conjecturally) a computational complexity barrier in the bounded degree setting but not the bounded degeneracy one.
For our discussion regarding this intuition, consider the algorithm which applies Algorithm~\ref{algorithm: fcp} to construct $q$ independent sets concurrently, i.e., at iteration $i$, the algorithm determines whether $v_i \in I_c$ for all $c \in [q]$ before proceeding further. Clearly, this procedure constructs an $(\alpha, q)$-coloring of $G$.
Suppose we color vertex by vertex (as our algorithm does) in degeneracy order (or, in the bounded degree setting, in any order). We now explain why we do not expect any such algorithm (in particular, ours) to succeed in the bounded degree setting.

At each step, the expected number of ways to continue the coloring should be at least $1$, otherwise we expect to fail rapidly. This expected number will have to do with the number of constraints on our coloring, which is given by the $d$ left-neighbors, independent of where in the degeneracy order we are; our choice of $\alpha$ in $(\alpha,q)$-coloring ensures that the expected number of choices is always a bit more than $1$. Since we get roughly the same expected number every time, this corresponds fairly well to the threshold $\alpha$ which makes the first moment of the total number of $(\alpha,q)$-colorings tend to infinity with $n$ as opposed to zero.

If we try the same heuristic for the bounded degree setting, with degrees bounded by $2d$ (so that the average degree is the same as for the $d$-degenerate case), for the same choice of $\alpha$ we get roughly the same first moment of the total number of $(\alpha,q)$-colorings. But when we look at coloring vertex by vertex, at the beginning of the order we generally expect to see very few constraints (so the expected number of ways to color is huge, but we can only use one), in the middle we see about $d$ constraints (and as above, the expected number of ways to continue is about $1$), and after the middle the number of constraints grows to $2d$, and we typically will fail to continue our coloring very quickly after the middle. For an approach like this to work, we would need to keep track of all the valid colorings early on and expect that a tiny but non-zero fraction of the valid colorings we build over the first half of the vertices turn out to extend to valid colorings of the whole graph. We do not in any case know how to analyze this rigorously, and from an algorithmic point of view, this means keeping track of exponentially many colorings which we cannot do efficiently.

For Algorithm~\ref{algorithm: fcp}, we can be a bit more specific about how it fails in the bounded degree setting. Having fixed an order, suppose $v$ is a vertex with significantly more than $d$ left-neighbors in the order. The sum of the weights of the left-neighbors starts significantly larger than $\alpha d$, and by the martingale property, in expectation this sum remains the same. But, as we explained above where pointing out why we cannot improve the factor $4$ without a girth assumption, this means we expect that either a neighbor of $v$ will be chosen into $I$, or $v$ will become bad: either way we do not expect to choose $v$ into $I$. In other words, we expect the critical bound $\Pr(v\in I)\ge(1-o(1))\alpha$ will fail for vertices $v$ with significantly more than $d$ left-neighbors, and consequently we will not assign $v$ sufficient colors to make an $(\alpha,q)$-coloring when we run Algorithm~\ref{algorithm: fcp} $q$ times.

\medskip

The independent sets we find in the fractional coloring in Theorem~\ref{theo: girth 5} are of expected size $(1+o(1))\frac{n\log d}{d}$ scattered fairly evenly through the degeneracy order (as opposed to more vertices appearing late in the order, for example). Similarly, these are the worst case independent sets for Theorem~\ref{theo: rg}. The proof does not characterize the worst case sets this accurately: but observe that in the proof of Lemma~\ref{lem: fractional clique}, the maximum is for $x=(1+o(1))\frac{\log d}{d}$, which corresponds to independent sets $S$ of size $(1+o(1))\tfrac{n\log d}{d}$ which satisfy $\ell_n(S)\approx\frac{n\log d}{d}$. Since $\ell_n([n])=n$, this is consistent with $S$ being uniformly scattered over $[n]$ (there are other ways to arrange for these two constraints to hold, which will make a smaller contribution when lower order terms are considered, but these are not required for the proof).

However, the random model used to prove Theorem~\ref{theo: rg} does contain some significantly larger independent sets. Proposition~\ref{prop:probability bound} actually gives a fairly good estimate of the probability that a given $S$ will be independent, and we see that if we choose $S$ larger, but such that $\ell_n(S)$ is smaller, we keep the probability of independence relatively high. This corresponds to sets $S$ which contain lots of elements later on in the degeneracy order and few early ones. We should stress that this argument is quite heuristic and even done accurately would only find the optimal $s$ such that the first moment of the number of independent sets of size $s$ tends to infinity. To prove that there is likely to exist an independent set of this size, one would need a further argument (such as a second moment calculation).

\vspace{2mm}
\subsection*{Acknowledgments}

This work was initiated during the workshop ``Cross-Community Collaborations in Combinatorics'' at the Banff International Research Station in June, 2026. We thank Natasha Morrison, Jozef Skokan, and Evelyne Smith-Roberge for organizing the workshop and inviting us, and all of the participants for stimulating discussions.
We also thank Seth Pettie for comments on an earlier version of this manuscript.

The authors used ChatGPT 5.5 Pro to aid in the proof of Theorem~\ref{theo: rg}. In particular, it was used to solve an optimization problem that the authors formulated to determine the correct shape of the fractional clique function, to check and simplify some probabilistic and algebraic estimates, and to help draft preliminary versions of some calculations. It also directed our attention to the reference~\cite{Pushnitski23}. The construction and overall strategy of the proof were formulated and the final exposition was written by the authors, who take full responsibility for the content.

\vspace{2mm}
\printbibliography

@book {Mitzenmacher,
    AUTHOR = {Mitzenmacher, Michael and Upfal, Eli},
     TITLE = {Probability and computing},
   EDITION = {Second},
      NOTE = {Randomization and probabilistic techniques in algorithms and
              data analysis},
 PUBLISHER = {Cambridge University Press, Cambridge},
      YEAR = {2017},
     PAGES = {xx+467},
      ISBN = {978-1-107-15488-9},
   MRCLASS = {68-01 (60C05 60G42 60J10 60K25 62H30 68W20 68W40)},
  MRNUMBER = {3674428},
}

@book {MolloyReed,
    AUTHOR = {Molloy, M. and Reed, B.},
     TITLE = {Graph Colouring and the Probabilistic Method},
    SERIES = {Algorithms and Combinatorics},
    VOLUME = {23},
 PUBLISHER = {Springer-Verlag, Berlin},
      YEAR = {2002},
     PAGES = {xiv+326},
      ISBN = {3-540-42139-4},
   MRCLASS = {05-02 (05C15 05C80 60-02 60C05)},
  MRNUMBER = {1869439},
MRREVIEWER = {P.\ Mark\ Kayll},
       DOI = {10.1007/978-3-642-04016-0},
       URL = {https://doi.org/10.1007/978-3-642-04016-0},
}

@article {FriezePerezGimenezPralatReiniger19,
    AUTHOR = {Frieze, A. and P\'erez-Gim\'enez, X. and Pra\l at,
              P. and Reiniger, B.},
     TITLE = {Perfect matchings and {H}amiltonian cycles in the preferential
              attachment model},
   JOURNAL = {Random Structures Algorithms},
  FJOURNAL = {Random Structures \& Algorithms},
    VOLUME = {54},
      YEAR = {2019},
    NUMBER = {2},
     PAGES = {258--288},
     DOI = {10.1002/rsa.20778},
URL = {https://doi.org/10.1002/rsa.20778}
}

@article {MalyshkinZhukovskii22,
    AUTHOR = {Malyshkin, Y. A. and Zhukovskii, M. E.},
     TITLE = {{$\gamma$}-variable first-order logic of uniform attachment
              random graphs},
   JOURNAL = {Discrete Math.},
  FJOURNAL = {Discrete Mathematics},
    VOLUME = {345},
      YEAR = {2022},
    NUMBER = {5},
     PAGES = {Paper No. 112802, 12},
     DOI = {10.1016/j.disc.2022.112802},
URL = {https://doi.org/10.1016/j.disc.2022.112802}
}

@article {AcanPittel20,
    AUTHOR = {Acan, H. and Pittel, B.},
     TITLE = {On connectivity, conductance and bootstrap percolation for a
              random {$k$}-out, age-biased graph},
   JOURNAL = {Random Structures Algorithms},
  FJOURNAL = {Random Structures \& Algorithms},
    VOLUME = {56},
      YEAR = {2020},
    NUMBER = {1},
     PAGES = {37--62},
     DOI = {10.1002/rsa.20872},
URL = {https://doi.org/10.1002/rsa.20872}
}

@book {Stanley15,
    AUTHOR = {Stanley, R. P.},
     TITLE = {Catalan numbers},
 PUBLISHER = {Cambridge University Press, New York},
      YEAR = {2015},
      DOI = {10.1017/CBO9781139871495},
URL = {https://doi.org/10.1017/CBO9781139871495}
}

@article {Pushnitski23,
    AUTHOR = {Pushnitski, A.},
     TITLE = {The spectral density of {H}ardy kernel matrices},
   JOURNAL = {J. Operator Theory},
  FJOURNAL = {Journal of Operator Theory},
    VOLUME = {89},
      YEAR = {2023},
    NUMBER = {1},
     PAGES = {3--21}
}

@misc{Dhawan,
      title={Fractional coloring via entropy}, 
      author={Abhishek Dhawan},
      year={2026},
      eprint={2603.17730},
      archivePrefix={arXiv},
      primaryClass={math.CO},
      url={https://arxiv.org/abs/2603.17730}, 
}

@misc{bradshaw2025toward,
  author = {Bradshaw, Peter and Dhawan, Abhishek and Methuku, Abhishek and Wigal, Michael C.},
  title = {Toward {V}u's conjecture},
  year = {2025},
  eprint={2508.16818},
      archivePrefix={arXiv},
      primaryClass={math.CO},
      url={https://arxiv.org/abs/2508.16818},
}

@article {MartStein,
    AUTHOR = {Martinsson, Anders and Steiner, Raphael},
     TITLE = {Random independent sets in triangle-free graphs},
   JOURNAL = {Forum Math. Sigma},
  FJOURNAL = {Forum of Mathematics. Sigma},
    VOLUME = {13},
      YEAR = {2025},
     PAGES = {Paper No. e156, 19},
     DOI = {10.1017/fms.2025.10112},
URL = {https://doi.org/10.1017/fms.2025.10112}
}

@unpublished{johansson,
	author = {A. Johansson},
	title = {Asymptotic choice number for triangle free graphs},
	howpublished = {Unpublished manuscript},
	date = {1996},
}

@article{shearer,
  title={A note on the independence number of triangle-free graphs},
  author={Shearer, James B},
  journal={Discrete Mathematics},
  volume={46},
  number={1},
  pages={83--87},
  year={1983},
  publisher={Elsevier},
  DOI = {10.1016/0012-365X(83)90273-X},
URL = {https://doi.org/10.1016/0012-365X(83)90273-X}
}

@article{Molloy,
    author = {M. Molloy},
    title = {The list chromatic number of graphs with small clique number},
    journaltitle = {J. Combin. Theory},
    series = {B},
    volume = {134},
    pages = {264--284},
    date = {2019},
    DOI = {10.1016/j.jctb.2018.06.007},
URL = {https://doi.org/10.1016/j.jctb.2018.06.007}
}

@article{AKS,
    AUTHOR = "N. Alon and M. Krivelevich and B. Sudakov",
    TITLE = "{Coloring graphs with sparse neighborhoods}",
    JOURNAL = "J. Combin. Theory",
    series = {B},
    YEAR = "1999",
    volume = {77},
    pages = {73--82},
    DOI = {10.1006/jctb.1999.1910},
URL = {https://doi.org/10.1006/jctb.1999.1910}
}

@article{bernshteyn2023counting,
  title={Counting colorings of triangle-free graphs},
  author={Bernshteyn, Anton and Brazelton, Tyler and Cao, Ruijia and Kang, Akum},
  journal={Journal of Combinatorial Theory, Series B},
  volume={161},
  pages={86--108},
  year={2023},
  publisher={Elsevier},
  DOI = {10.1016/j.jctb.2023.02.004},
URL = {https://doi.org/10.1016/j.jctb.2023.02.004}
}

@inproceedings{hurley2023uniformly,
  title={Uniformly random colourings of sparse graphs},
  author={Hurley, Eoin and Pirot, Fran{\c{c}}ois},
  booktitle={Proceedings of the 55th Annual ACM Symposium on Theory of Computing},
  pages={1357--1370},
  year={2023},
  DOI = {10.1145/3564246.3585242},
URL = {https://doi.org/10.1145/3564246.3585242}
}

@misc{BFSSZ,
  title={Coloring small locally sparse degenerate graphs and related problems},
  author={Brada{\v{c}}, Domagoj and Fox, Jacob and Steiner, Raphael and Sudakov, Benny and Zhang, Shengtong},
  year={2026},
  eprint={2601.15245},
      archivePrefix={arXiv},
      primaryClass={math.CO},
      url={https://arxiv.org/abs/2601.15245}, 
}

@article{EKT,
  title={Separation choosability and dense bipartite induced subgraphs},
  author={Esperet, Louis and Kang, Ross J and Thomass{\'e}, St{\'e}phan},
  journal={Combinatorics, Probability and Computing},
  volume={28},
  number={5},
  pages={720--732},
  year={2019},
  publisher={Cambridge University Press},
  DOI = {10.1017/S0963548319000026},
URL = {https://doi.org/10.1017/S0963548319000026}
}

@article{harris,
  title={Some results on chromatic number as a function of triangle count},
  author={Harris, David G},
  journal={SIAM Journal on Discrete Mathematics},
  volume={33},
  number={1},
  pages={546--563},
  year={2019},
  publisher={SIAM},
  DOI = {10.1137/17M115918X},
URL = {https://doi.org/10.1137/17M115918X}
}

@misc{HHKP,
      title={Improving ${R}(3,k)$ in just two bites}, 
      author={Zion Hefty and Paul Horn and Dylan King and Florian Pfender},
      year={2026},
      eprint={2510.19718},
      archivePrefix={arXiv},
      primaryClass={math.CO},
      url={https://arxiv.org/abs/2510.19718}, 
}

@article{AndersonBernshteynDhawan,
  author = {Anderson, James and Bernshteyn, Anton and Dhawan, Abhishek},
  title = {Coloring graphs with forbidden bipartite subgraphs},
  journal = {Combinatorics, Probability and Computing},
  volume = {32},
  number = {1},
  pages = {45--67},
  year = {2023},
  DOI = {10.1017/S0963548322000104},
URL = {https://doi.org/10.1017/S0963548322000104}
}

@misc{Kttt,
  author = {Dhawan, Abhishek and Janzer, Oliver and Methuku, Abhishek},
  title = {Independent sets and colorings of $K_{t,t,t}$-free graphs},
  year = {2025},
      eprint={2511.17191},
      archivePrefix={arXiv},
      primaryClass={math.CO},
      url={https://arxiv.org/abs/2511.17191}, 
}

@misc{campos2023new,
  author = {Campos, Marcelo and Jenssen, Matthew and Michelen, Marcus and Sahasrabudhe, Julian},
  title = {A new lower bound for sphere packing},
  year = {2023},
  eprint={2312.10026},
      archivePrefix={arXiv},
      primaryClass={math.MG},
      url={https://arxiv.org/abs/2312.10026}, 
}

@article{bernshteyn2019johansson,
  title={The Johansson-Molloy theorem for DP-coloring},
  author={Bernshteyn, Anton},
  journal={Random Structures \& Algorithms},
  volume={54},
  number={4},
  pages={653--664},
  year={2019},
  publisher={Wiley Online Library},
  DOI = {10.1002/rsa.20811},
URL = {https://doi.org/10.1002/rsa.20811}
}

@unpublished{DKPS,
  author = {Davies, Ewan and Kang, Ross J. and Pirot, Fran{\c{c}}ois and Sereni, Jean-S{\'e}bastien},
  title = {Graph structure via local occupancy},
  year = {2020},
  eprint={2003.14361},
      archivePrefix={arXiv},
      primaryClass={math.CO},
      url={https://arxiv.org/abs/2003.14361}, 
}

@article {Kim,
    AUTHOR = {Kim, Jeong Han},
     TITLE = {On {B}rooks' theorem for sparse graphs},
   JOURNAL = {Combin. Probab. Comput.},
  FJOURNAL = {Combinatorics, Probability and Computing},
    VOLUME = {4},
      YEAR = {1995},
    NUMBER = {2},
     PAGES = {97--132},
     DOI = {10.1017/S0963548300001528},
URL = {https://doi.org/10.1017/S0963548300001528}
}

@techreport{Karp,
    Author= {Karp, Richard M.},
    Title= {The Probabilistic Analysis of some Combinational Search Algorithms},
    Year= {1976},
    Month= {Apr},
    Url= {http://www2.eecs.berkeley.edu/Pubs/TechRpts/1976/28848.html},
    Number= {UCB/ERL M581},
}

@article {FGM,
    AUTHOR = {Fiz Pontiveros, G. and Griffiths, S. and Morris,
              R.},
     TITLE = {The triangle-free process and the {R}amsey number {$R(3,k)$}},
   JOURNAL = {Mem. Amer. Math. Soc.},
  FJOURNAL = {Memoirs of the American Mathematical Society},
    VOLUME = {263},
      YEAR = {2020},
    NUMBER = {1274},
     PAGES = {v+125},
       DOI = {10.1090/memo/1274},
       URL = {https://doi.org/10.1090/memo/1274},
}

@article {BohmanKeevash,
    AUTHOR = {Bohman, T. and Keevash, P.},
     TITLE = {Dynamic concentration of the triangle-free process},
   JOURNAL = {Random Structures Algorithms},
  FJOURNAL = {Random Structures \& Algorithms},
    VOLUME = {58},
      YEAR = {2021},
    NUMBER = {2},
     PAGES = {221--293},
       DOI = {10.1002/rsa.20973},
       URL = {https://doi.org/10.1002/rsa.20973},
}

@article {FL,
    AUTHOR = {Frieze, A. M. and \L uczak, T.},
     TITLE = {On the independence and chromatic numbers of random regular
              graphs},
   JOURNAL = {J. Combin. Theory Ser. B},
  FJOURNAL = {Journal of Combinatorial Theory. Series B},
    VOLUME = {54},
      YEAR = {1992},
    NUMBER = {1},
     PAGES = {123--132},
       DOI = {10.1016/0095-8956(92)90070-E},
       URL = {https://doi.org/10.1016/0095-8956(92)90070-E},
}

\end{document}